\begin{document}
\newtheorem{Theorem}{Theorem}[section]
\newtheorem{Proposition}[Theorem]{Proposition}
\newtheorem{Lemma}[Theorem]{Lemma}
\newtheorem{Example}[Theorem]{Example} 
\newtheorem{Corollary}[Theorem]{Corollary}
\newtheorem{Fact}[Theorem]{Fact}
\newtheorem{Conjecture}[Theorem]{Conjecture}
\newtheorem{Remark}[Theorem]{Remark}

\newenvironment{Definition} {\refstepcounter{Theorem} \medskip\noindent
 {\bf Definition \arabic{section}.\arabic{Theorem}}\ }{\hfill}

\newenvironment{Remarks} {\refstepcounter{Theorem}
\medskip\noindent {\bf Remarks
\arabic{section}.\arabic{Theorem}}\ }{\hfill}

\newenvironment{Exercise} {\medskip\refstepcounter{Theorem}
     \noindent {\bf Exercise
\arabic{section}.\arabic{Theorem}}\ } {\hfill}

\newenvironment{Proof}{{\noindent \bf Proof\ }}{\hfill}

\newenvironment{claim} {{\smallskip\noindent \bf Claim\ }}{\hfill}

\def \L {{\cal L}}
\def \diamond {$\diamondsuit$}
\def \B {{\cal B}}
\def \mod {{\rm mod \ }}
\def \iso {\cong}
\def \Lor  {\L_{\rm or}}
\def \Lr {\L_{\rm r}}
\def \Lg {\L_{\rm g}}
\def \I {{\cal I}}
\def \M {{\cal M}}\def\N {{\cal N}}
\def \E {{\cal E}}
\def \Proj {{\mathbb P}}
\def \H {{\cal H}}
\def \x {\times}
\def \Stab {{\rm Stab}}
\def \Z {{\mathbb Z}}
\def \V {{\mathbb V}}
\def \C {{\mathbb C}} \def \Cexp {\C_{\rm exp}}
\def \R {{\mathbb R}}
\def \Q {{\mathbb Q}}\def \K {{\mathbb K}}
\def \F {{\cal F}}\def \A {{\mathbb A}}
\def \G {{\cal G}} \def \X {{\mathbb X}}
\def \GG {{\mathbb G}}
\def\HH {{\mathbb H}}
\def \Nn {{\mathbb N}}\def \Nn {{\mathbb N}}
\def\D {{\mathbb D}}
\def \hat {\widehat}
\def \bar{\overline}
\def \Spec {{\rm Spec}}
\def \bul {$\bullet$\ }
\def\proves {\vdash}
\def \Co {{\cal C}}
\def \ACFp {{\rm ACF}_p}
\def \ACF0 {{\rm ACF}_0}
\def \ee {\prec}
\def \Diag {{\rm Diag}}
\def \Diage {{\rm Diag}_{\rm el}}
\def \DLO {{\rm DLO}}
\def \d {{\rm depth}}
 \def \dist {{\rm dist}}
\def \P {{\cal P}}
\def \ds {\displaystyle}
\def \Fp {{\mathbb F}_p}
\def \acl {{\rm acl}}
\def \dcl {{\rm dcl}}

\def \dom {{\rm dom}}
\def \tp {{\rm tp}}
\def \stp {{\rm stp}}
\def \Th  {{\rm Th}}
\def\< {\Lngle}
\def \> {\rangle}
\def \n {\noindent}
\def \minusdot{\hbox{\ {$-$} \kern -.86em\raise .2em \hbox{$\cdot \
$}}}
\def\exp {{\rm exp}}\def\ex {{\rm ex}}
\def \td {{\rm td}\ }
\def \ld {{\rm ld}}
\def \span {{\rm span}}
\def \tilde {\widetilde}
\def \d {\partial}
\def \del {\partial}
\def \cl {{\rm cl}}
\def \acl {{\rm acl}}
\def \cN {{\cal N}}
\def \Qalg {{\Q^{\rm alg}}}
\def \th {^{\rm th}}
\def \deg { {\rm deg} }
\def\hat {\widehat}
\def\li {\L_{\infty,\omega}}
\def\lo {\L_{\omega_1,\omega}}
\def\lk {\L_{\kappa,\omega}}
\def \ee {\prec}\def \bSigma {{\mathbf\Sigma}}
\def \mod {\ {\rm mod\ }}
\def \K{{\mathbb K}}
\def \X {{\bf X}}
\def \c {{\bf c}}
\def \d {{\bf d}}
\def \I {{\cal I}}
\def \alg {{\rm alg}}
\def \SS {{\rm SS}}
\def \blue {\color{blue}}
\def\red{\color{red}}
\def \supp {{\rm supp}}
\def \< {\langle}
\def \> {\rangle}
 \def \lc{\lceil}
 \def \rc{\rceil}
 
\title{Uncountable Real Closed Fields with {\rm PA} Integer Parts}
\author{David Marker\thanks{Partially supported by NSF grant  DMS-0653484.}\\ University of Illinois at Chicago\and James H. Schmerl\\University of Connecticut \and Charles Steinhorn\thanks{Partially supported by NSF grant DMS-0801256.}\\ Vassar College}
\maketitle

\begin{abstract} D'Aquino, Knight and Starchenko classified the countable
real closed fields with integer parts that are nonstandard models of Peano
Arithmetic.  We rule out some possibilities for extending their results to the 
uncountable and study real closures of $\omega_1$-like models of PA.\end{abstract}

If $K$ is a real closed field we say that a subring $M$ of $K$ is an {\em integer part} of $K$ if $M$ is discretely ordered, i.e., there is no element $m\in M$ with $0<m<1$, and for every $x\in K$ there is $m\in M$ such that $x\le m < x+1$.  
A surprising theorem of Mourgues and Ressayre \cite{MR} tells us that every real closed field has an integer part.

Integer parts satisfy a very weak fragment of Peano Arithmetic (PA).  By {\em Open Induction} we mean the fragment of PA in the language $\{+,-,\cdot, <\nobreak, 0, 1\}$ in which we restrict the induction schema to quantifier-free formulas. Shepherdson \cite{S} showed that the set of nonnegative elements of an ordered ring is a model of open induction if and only if the ring is an integer part of its real closure.   Open Induction is a very weak fragment; indeed, Shepherdson showed that it is too weak to prove the irrationality of  $\sqrt {2}$.  D'Aquino, Knight and Starchenko \cite{DKS} investigated which real closed fields have an integer part whose nonnegative elements form a nonstandard model of PA.

Recall that a structure $\M$ in a finite language $L$ is {\em resplendent} if and only if for every finite $L^*\supset L$, recursive $L^*$-theory $T(\bar w)$ with free variables $\bar w=(w_1,\ldots ,w_k)$, and tuple $\bar a\in M^k$, if 
$T(\bar a) \cup \Th (\M, \bar a)$ is consistent, then there is an expansion of $\M$ to an $L^*$-structure 
$(\M^*, \bar a)\models T(\bar w)$. A structure $\M$ is {\em recursively saturated} if for every recursive set of formulas $\gamma(v,\bar w)$ in the free variables $v$ and $\bar w=(w_1,\ldots ,w_k)$, if $\bar a\in M^k$ and $\gamma(v,\bar a)$ is consistent with the elementary diagram of $\M$ then $\gamma$ is realized in $\M$.  Barwise and Schlipf \cite{BS} showed that, in a finite language, every resplendent model is recursively saturated and every countable recursively saturated model is resplendent.

It is easy to see that every resplendent real closed field has an integer part that is a model of PA.  D'Aquino, Knight and Starchenko showed that every real closed field with a nonstandard model of PA as an integer part is recursively saturated.
Thus a countable real closed field has an integer part that is a nonstandard model of PA if and only if it is recursively saturated. Ko\l odziejczy and Je\v{r}\'abek \cite{KJ} generalized  this to show that a real closed field must be recursively saturated even to have an integer part that is a nonstandard model of IE$_2$, where  IE$_2$
is the fragment of I$\Delta_0$ in which induction is allowed just for formulas beginning with a string of bounded existential quantifiers followed by a string of bounded universal quantifiers.

Is there a natural characterization of the uncountable real closed fields with nonstandard models of PA for integer parts?
We show that two natural possibilities do not work.  In \S \ref{recsat} we show that there are recursively saturated (indeed even $\aleph_1$-saturated) real closed fields with no model of PA as an integer part and in \S \ref{resp} we show that the real closure of an $\omega_1$-like model of PA is not resplendent.   

One interesting consequence of \cite{DKS} is that two countable nonstandard models of PA have isomorphic real closures if and only if they have the same standard systems.
Can this be generalized to $\omega_1$-like models?  In \S 3 we show that two $\omega_1$-like models of PA with 
the same standard system have real closures with isomorphic value groups, but in \S 4 we show that there are $2^{\aleph_1}$ $\omega_1$-like recursively saturated models of PA with the same standard 
system and pairwise non-isomorphic  real closures. In an earlier version of this paper the first and third authors proved this result from $\diamondsuit$; the second author showed how to eliminate this assumption. 

We now fix some notation and terminology that will be employed throughout the paper. 
If $\M$ is a domain, we let $Q(\M)$ denote the fraction field of $\M$ and $R(\M)$ denote its real closure. 

If $K$ is a real closed field and $O$ is a convex subring, there is a canonical valuation 
$v_O:K^\times\rightarrow \Gamma_O$ defined by $v_O(x)\le v_O(y)$ if and only if $y/x\in O$.
We let $k_O$ denote the residue field under $v_O$.
If $O$ is the convex subring of finite elements of $K$, we have the {\em standard valuation\/} $v:K^\times\rightarrow\Gamma$, and we let $v(K)$ denote the value group $\Gamma$ with respect to this valuation.  Note that if $\Gamma_0\subset \Gamma$ is the convex subgroup $\Gamma_0=\{v(x):v_O(x)=0\}$,
then $\Gamma_O\iso \Gamma/\Gamma_0$.

\smallskip
The authors are very grateful to Roman Kossak for several discussions on this material. The first author would also like to thank the CUNY Graduate Center for its hospitality during the 2011--12 academic year.

 \section {Saturation v. Integer Parts} \label{recsat}
 
 We begin by showing that there are $\aleph_1$-saturated real closed fields in which no integer part is a model of PA.
 
 If $(G,+,<)$ is an ordered abelian group, $k$ is a  field, and $t$ is an indeterminate 
 we can form the Hahn series field, $k((G))$ of formal sums
 $$f=\sum_{g\in G} a_g t^g,$$ where each $a_g\in k$ and the support of $f$, 
 $$\supp(f)=\{g:a_g\ne 0\}$$ 
 is well ordered by $<$.  We identify $t^0$ with $1$.
 Addition of two series is performed componentwise and multiplication is defined by 
 $$\left(\sum_{g\in G}a_gt^g\right)\left(\sum_{g\in G}b_gt^g\right)=\sum_{g\in G}\left(\sum_{g_1+g_2=g}a_{g_1}b_{g_2}\right) t^g.$$
This operation is well-defined and makes $k((G))$ into a field that carries a natural valuation given by $v(f)=\min\supp(f)$. Note that this valuation $v$ is indeed the standard valuation as defined above in the introduction. The following lemma summarizes the basic facts we need about $k((G))$.  See, for example \cite{N} for details.

 \begin{Lemma}\label{L1.1} {\rm (i)}\enspace If $k$ is real closed and $G$ is divisible, then $k((G))$ is a real closed field.
In this case, the unique ordering agrees with the lexicographic ordering and the infinitesimal elements are the
elements $\epsilon$ with $v(\epsilon)>0$.

{\rm (ii)}\enspace If $\sum_{n=0}^\infty a_nX^n$ is a formal power series over $k$ and $\epsilon\in k((G))$ satisfies $v(\epsilon)>0$, then $\sum_{n=0}^\infty a_n\epsilon^n$ is a well-defined element of $k((G))$.
\end{Lemma}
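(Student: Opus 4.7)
The plan is to prove (ii) first, since the convergent series produced by (ii) are exactly what is needed for the square-root step in (i). Both parts rest on the classical Neumann lemma: if $S \subseteq G_{>0}$ is well-ordered, then the set $\Sigma(S)$ of all finite sums (with repetition) of elements of $S$ is well-ordered in $G$, and each $g \in \Sigma(S)$ admits only finitely many such representations.

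For (ii), set $S = \supp(\epsilon) \subseteq G_{>0}$. The support of $\epsilon^n$ is contained in the set of $n$-fold sums drawn from $S$, so $\bigcup_{n} \supp(a_n \epsilon^n) \subseteq \Sigma(S)$ is well-ordered by Neumann. For each $g \in \Sigma(S)$, only finitely many $n$ admit a decomposition $g = g_1 + \cdots + g_n$ with $g_i \in S$, and for each such $n$ the coefficient of $t^g$ in $\epsilon^n$ is a finite sum in $k$. Hence the coefficient of $t^g$ in $\sum_n a_n \epsilon^n$ is a well-defined element of $k$ and the total series is a legitimate Hahn series.

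For (i), I would declare $f > 0$ iff the leading coefficient $a_{v(f)}$ is positive in $k$. Compatibility with $+$ and $\cdot$ follows from $v(f+g) \geq \min(v(f), v(g))$ together with the fact that the coefficient of $t^{v(fg)}$ in $fg$ is the product of the leading coefficients of $f$ and $g$; after reversing the order on $G$ this is manifestly the lexicographic order on coefficient sequences, and an $\epsilon$ is infinitesimal exactly when $v(\epsilon) > 0$. To establish real closedness, I would verify the two standard axioms: every positive element is a square, and every odd-degree polynomial has a root. For the square-root axiom, factor a positive $f$ as $a t^\gamma(1 + \eta)$ with $a > 0$ in $k$, $\gamma \in G$, and $\eta$ infinitesimal; divisibility of $G$ yields $\gamma/2$, real closedness of $k$ yields $\sqrt{a}$, and part (ii) applied to the binomial expansion $\sum \binom{1/2}{n} X^n$ yields $\sqrt{1 + \eta}$.

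The main obstacle will be the odd-degree case. My approach is to exploit that $k((G))$ is maximally complete (every pseudo-Cauchy sequence admits a coefficientwise pseudo-limit), which by Kaplansky's theorem makes it henselian, with residue field $k$ and value group $G$. The standard Ax-Kochen-Ershov-style principle for henselian ordered fields --- a henselian ordered field with real closed residue field and divisible value group is itself real closed --- then completes the argument. Alternatively, one can proceed directly via the Newton polygon of an odd-degree polynomial: select an edge whose residual polynomial over $k$ has odd degree (and hence a root, since $k$ is real closed), and then Hensel-lift. As the paper signals, full details are available in \cite{N}.
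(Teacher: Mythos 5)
Your sketch is correct and follows the classical argument that the paper itself does not reproduce: the paper simply defers to Neumann \cite{N} for this lemma. Your ingredients --- Neumann's lemma on well-ordered sums of a well-ordered subset of $G_{>0}$ for part (ii), the leading-coefficient (lexicographic) ordering, the binomial series for square roots via divisibility of $G$ and real closedness of $k$, and henselianity of the maximally complete field $k((G))$ (or a Newton polygon argument) for odd-degree polynomials --- are exactly the standard route found in that reference.
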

 
We show that we can choose $G$ so that that ${\mathbb R}((G))$ is $\aleph_1$-saturated but does not have an integer part that is a model of PA.  Thus recursive saturation is insufficient to guarantee an integer part that is a model of PA.  We need  the following folklore lemma. Sharper versions appear in \cite{KKMZ}, but we include the proof below for completeness.

\begin{Lemma} Let $G$ be an $\aleph_1$-saturated divisible ordered abelian group. Then the real closed Hahn series field 
$K=\R((G))$ is $\aleph_1$-saturated.
\end{Lemma}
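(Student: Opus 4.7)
By quantifier elimination for real closed fields in the language of ordered rings, a complete $1$-type over a countable subfield $F \subseteq K$ is determined by the cut it induces on the real closure $R(F) \subseteq K$, which is itself countable. Hence to prove $\aleph_1$-saturation of $K$ it suffices to establish that $K$ is an $\eta_1$-set as a linear order: for any countable $A, B \subseteq K$ with $A < B$ there exists $\alpha \in K$ with $A < \alpha < B$. The remainder of the argument establishes this $\eta_1$-property.

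Fix such $A$ and $B$, and after replacing each element by a running maximum or minimum enumerate them as a strictly increasing sequence $(a_n)_{n<\omega}$ and a strictly decreasing sequence $(b_n)_{n<\omega}$ with $a_n < b_n$ for all $n$. (If either sequence attains an extremum in $K$, density of the order yields $\alpha$ at once.) Let $\epsilon_n = b_n - a_n > 0$ and $\gamma_n = v(\epsilon_n) \in G$; by passing to subsequences we may assume that $(\gamma_n)$ is either strictly increasing in $G$ or eventually constant. In the strictly increasing case, the inequality $v(a_{n+1} - a_n) \geq \gamma_n$ makes $(a_n)$, after a further thinning, a pseudo-Cauchy sequence. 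Since $K = \R((G))$ is spherically complete (maximal as a valued field), the sequence $(a_n)$ has a pseudo-limit $\alpha \in K$; strict monotonicity yields $\alpha > a_n$ for all $n$, and a short leading-term comparison at the valuation level $\gamma_n$ forces $\alpha < b_n$.

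In the eventually-constant case (say $\gamma_n = \gamma$ for all large $n$), one constructs $\alpha$ as a Hahn series $\sum_{\xi < \omega_1} c_\xi t^{s_\xi}$ by transfinite recursion, with strictly increasing exponents $s_\xi \in G$ (so the support is automatically well-ordered) and coefficients $c_\xi \in \R$. At each stage the Dedekind completeness of the residue field $\R$ is used to select $c_\xi$, while the $\aleph_1$-saturation of $G$ produces $s_\xi$ satisfying the countably many strict inequalities in $G$ imposed by the parameters and by the previously chosen exponents. The main obstacle is to verify that this transfinite recursion terminates after countably many steps and that the resulting series actually lies in $K$ and realizes the cut. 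The argument must coordinate three completeness-type features of the situation: the spherical completeness of $K$ (for valuation-theoretic convergence), the Dedekind completeness of $\R$ (for residue-field convergence), and the $\aleph_1$-saturation of $G$ (for exponent selection), the last of which is the point at which the hypothesis on $G$ is essential and without which the construction could fail either to terminate in countably many steps or to produce a well-ordered support.
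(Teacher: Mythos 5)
Your overall strategy matches the paper's (reduce via quantifier elimination to realizing countable cuts, then split on the behaviour of $\gamma_n=v(b_n-a_n)$), but the strictly increasing case has a genuine gap. When $(\gamma_n)$ is strictly increasing it does \emph{not} follow that a pseudo-limit of $(a_n)$ supplied by spherical completeness lies in $\bigcap_n[a_n,b_n]$. Concretely, take $a_n=-t^{\gamma_n}$ with $(\gamma_n)$ strictly increasing and $b_n=-t^{\delta_n}$ with $(\delta_n)$ strictly decreasing and $\delta_j>\gamma_i$ for all $i,j$. Then $v(b_n-a_n)=\gamma_n$, the sequence $(a_n)$ is pseudo-Cauchy, and the canonical pseudo-limit (the union of the truncations below $\gamma_n$) is $0$; but $0>b_n$ for every $n$, so it misses the intersection. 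The pseudo-limits form a whole ball, only some of whose members realize the cut, and no leading-term comparison at level $\gamma_n$ distinguishes them: any $x$ in the intersection must satisfy $\gamma_n\le v(-x)\le\delta_j$ for all $n,j$, so one genuinely needs the $\aleph_1$-saturation of $G$ to produce an exponent in the gap between $\{\gamma_n\}$ and $\{\delta_j\}$. This is exactly the subcase the paper isolates (its Case~1, where the $b_j$ cluster at a scale finer than every $\gamma_i$), handled by choosing $\gamma$ with $\gamma_i<\gamma<v(b_0-b_j)$ and setting $x=b_0-t^\gamma$. Your pseudo-Cauchy argument is sound only in the complementary subcase where both endpoint sequences genuinely move at level $\gamma_n$ (the paper's Case~2).

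In the eventually constant case your text is a description of an intended construction rather than a proof: you propose a transfinite recursion of length up to $\omega_1$ and then state that its countable termination and the membership of the resulting series in $K$ are ``the main obstacle,'' i.e.\ you leave the case open. No such recursion is needed. If $\gamma_n=\gamma$ for all $n$, then after translating by the common truncation below $\gamma$ and dividing by $t^\gamma$ the problem becomes a cut $c_0\le c_1\le\cdots\le d_1\le d_0$ of real numbers; Dedekind completeness of $\R$ yields $r$ with $c_n\le r\le d_n$, and the only delicate point is the boundary subcase (say $r=d_n$ for all large $n$), which is settled by a single application of the $\aleph_1$-saturation of $G$ to pick $\delta>\gamma$ below the relevant values $v(b_n-(\cdots+rt^\gamma))$ and subtracting $t^\delta$. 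So the correct case analysis is: widths increasing with one endpoint sequence essentially stationary (saturation of $G$); widths increasing with both endpoint sequences moving (spherical completeness); widths constant (completeness of $\R$, plus possibly saturation of $G$). As written, your proof covers only the middle case.
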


\begin{proof}  It suffices by quantifier elimination for real closed fields to show that if 
 $$a_0<  \dots< a_n<\dots< b_n< \dots< b_0$$ with $a_n<b_m$ for all $n$ and $m$,
then there is an element $x\in \bigcap_{n=0}^\infty [a_n, b_n]$.  Let $v:K^\times\rightarrow G$ be the usual valuation and let
$\gamma_n=v(b_n-a_n)$.  Thinning the sequence if necessary there are three cases to consider.

\medskip {\n \bf Case 1.} $\gamma_0<\gamma_1<\cdots$ and $v(b_0-b_j)>\gamma_i$ for all $i,j$ (or, similarly, the case in which all the $a_i$ are very close to $a_0$). 

\smallskip
In this case we use the $\aleph_1$-saturation of $G$ to find $\gamma$ such that for all $i,j$
$\gamma_i <\gamma<v(b_0-b_j)$.   Let $x=b_0-t^\gamma$.  

Then
$$v(b_j-x)=\min (v(b_0-x), v(b_j-b_0))=\gamma \hbox { and } x<b_j$$ and
$$v(a_j-x)= \min (v(a_j-b_j), v(b_j-x))=\gamma_i \hbox{ and } a_j<x.$$  
Thus $x$ realizes the type $x\in \bigcap_{n=0}^\infty [a_n, b_n]$. 

\medskip {\n \bf Case 2.} $\gamma_0<\gamma_1<\cdots$ and   $v(b_{n+1}-b_n)=v(a_{n+1}-a_n) =\gamma_n$ for all $n$.

Suppose that $a_n=\sum r_{n,\gamma} t^\gamma$. Put $s_n=\sum_{\gamma<\gamma_n}r_{n,\gamma} t^\gamma$.
Then $s_n$ is an initial formal summand of $s_{n+1}$ and $s_{n+1}\in [a_n,b_n]$.  Let $s$ be the natural limit of the sequence of $s_n$ for $n\in\N$.  
Then $s\in \bigcap_{n=0}^\infty [a_n, b_n]$, as required.

\medskip \n {\bf Case 3.} There is an element $\gamma\in G$ such that $\gamma_n=\gamma$ for all $n$.  

Translating by $\sum_{\gamma'<\gamma}r_{\gamma'} t^{\gamma'}$ and multiplying by $t^{-\gamma}$ there are real numbers $c_n\ne d_n$ such that $a'_n=c_n(1+\epsilon_n)$ and $b'_n=d_n(1+\delta_n)$,
where $v(\epsilon_n), v(\delta_n)>0$.  Clearly $c_0\le c_1\le\cdots\le d_1\le d_0$.  Choose $r\in \R$ such that
$c_n\le r\le d_n$ for all $n$.
There are two subcases to consider.

\smallskip \n {\bf Subcase 3a.}    $c_n<r<d_n$ for all $n$.

Then $(\sum_{\gamma'<\gamma}r_{\gamma'} t^{\gamma'})+rt^\gamma\in \bigcap_{n=0}^\infty [a_n, b_n]$ as desired.

\smallskip \n {\bf Subcase 3b.} There is some $N$ such that $c_n<r=d_n$ for all $n\ge N$. (The case in which the $c_n$ are eventually constant is similar.)

In this case apply the $\aleph_1$-saturation of $G$ to find an element $\delta$ such that 
$$\gamma<\delta< v\left(b_n-\left(\sum_{\gamma'<\gamma}r_{\gamma'} t^{\gamma'}\right)+rt^\gamma\right)$$
for all $n\ge N$.  Then 
$$(\sum_{\gamma'<\gamma}r_{\gamma'} t^{\gamma'})+rt^\gamma-t^{\delta}$$ realizes the type, as required.
\end{proof}

\begin{Lemma} Suppose that $M\subset \R((G))$ is an integer part with $M\models {\rm PA}$. 
Then there is an exponential map $E:\R((G))\rightarrow \R((G))$, that is, a surjective homomorphism 
from the additive group of $\R((G))$ onto its multiplicative group of positive elements.
\end{Lemma}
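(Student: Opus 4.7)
The plan is to build $E$ from the PA-definable exponential on $M$ together with the Hahn series structure of $K = \R((G))$. Since $M \models \mathrm{PA}$, PA proves the totality of base-$2$ exponentiation, giving a homomorphism $\exp_M : (M, +) \to (M^{>0}, \cdot)$, $m \mapsto 2^m$, of super-polynomial growth. I will exploit the canonical additive decomposition
\[
K = A \oplus \R \oplus \mu_K,
\]
where $A = \R((G^{<0}))$ is the purely infinite part, $\R$ is the residue field, and $\mu_K$ is the maximal ideal of infinitesimals, together with the parallel multiplicative decomposition $K^{>0} \cong t^G \times \R^{>0} \times (1+\mu_K)$ obtained by factoring every positive $y$ as $y_{v(y)} \cdot t^{v(y)} \cdot (1 + \text{infinitesimal})$.

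I define $E$ as the product of three homomorphisms, one onto each factor. On $\R$, take the classical $r \mapsto 2^r$. On $\mu_K$, take the convergent series $\epsilon \mapsto \sum_{n \ge 0} (\ln 2)^n \epsilon^n / n!$, which is well-defined by Lemma~\ref{L1.1}(ii) and surjects onto $(1+\mu_K, \cdot)$ via the inverse log series. On $A$, take $E_A(a) := t^{\phi(a)}$ for a surjective group homomorphism $\phi: (A, +) \to (G, +)$ to be constructed from $\exp_M$. The composition $\psi := v \circ \exp_M: (M, +) \to (G, +)$ is a group homomorphism (by additivity of $v$ over multiplication), and since $M$ is an integer part, $v(M^{>0}) = G^{\le 0}$ while the super-polynomial growth of $\exp_M$ ensures $\psi(M^{>0})$ is cofinal in $G^{<0}$. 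The plan is then to extend $\psi$ --- first by composing with the projection $K \to A$ restricted to $M$ (since $M$-elements may have nonzero residue and infinitesimal parts and so are not literally in $A$), and then using divisibility of $G$ to enlarge the domain to all of $A$ --- yielding the desired $\phi$.

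The main obstacle is ensuring the extension $\phi$ surjects onto all of $G$, not merely onto the divisible hull of $\psi(M)$; here the PA hypothesis is essential, since $\exp_M$ supplies the cofinal homomorphism into $G^{<0}$ that the Hahn series structure alone does not provide. Once $\phi$ is in place, the assembled map $E(a+r+\epsilon) = t^{\phi(a)} \cdot 2^r \cdot 2^\epsilon$ is a surjective group homomorphism from $(K, +)$ onto $(K^{>0}, \cdot)$ by componentwise verification of homomorphism and surjectivity properties.
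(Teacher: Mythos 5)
Your three-factor strategy (the additive decomposition $K=A\oplus\R\oplus\mu$ of $K=\R((G))$ matched against the multiplicative decomposition $K^{>0}=t^G\times\R^{>0}\times(1+\mu)$) is a workable alternative, and the components on $\R$ and on $\mu$ are fine. The genuine gap is exactly at the step you flag as the main obstacle and then leave unresolved: surjectivity of $\phi:(A,+)\rightarrow(G,+)$. The justification you offer --- that $\psi=v\circ\exp_M$ has image cofinal in $G^{<0}$ --- does not suffice. A cofinal subgroup of a divisible ordered abelian group can be a very small proper subgroup ($\Q$ is cofinal in $(\R,+)$), and extending a homomorphism from a subgroup of $A$ to all of $A$ via injectivity of the divisible group $G$ gives no control over enlarging the image: the extension could have image exactly the divisible hull of $\psi(M)$ and nothing more, so $E$ need not be onto $t^G$. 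The gap is fillable, but by a sharper fact than cofinality: $\psi(M)=G$ exactly. Indeed, for $g<0$ the element $t^g$ is positive and infinite, so the integer part property yields $y\in M$ with $y\le t^g<y+1$, hence $v(y)=g$; PA proves that every $y\ge 1$ lies in $[2^m,2^{m+1})$ for some $m$, so $1\le y/2^m<2$ and $v(2^m)=v(y)=g$; values $g>0$ come from $2^{-m}$. (The relevant feature of $\exp_M$ is thus not its growth rate but that its image meets every interval $[y,2y]$.) Note also that once you use the integer part property this way, your abstract extension step becomes unnecessary: the purely infinite part of any $x$ equals that of $\lfloor x\rfloor\in M$, so $\pi_A(M)=A$ already, and $\phi=\bar\psi$ is defined on all of $A$ and surjective. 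You should still record the well-definedness check that $\psi$ factors through $\pi_A$, which holds because two elements of $M$ with the same purely infinite part differ by a standard integer, and standard powers of $2$ have valuation $0$.

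For comparison, the paper's proof sidesteps both the extension and the surjectivity-of-$\phi$ issue by not passing to valuations at all: since $M$ is an integer part, every $x\ge 0$ decomposes as $x=m+r+\epsilon$ with $m\in M$, $r\in[0,1)\cap\R$ and $\epsilon$ infinitesimal, and one sets $E(x)=2^m\cdot 2^r\cdot E_0(\epsilon\ln 2)$, using the PA-definable element $2^m\in M\subset K$ itself. Surjectivity is then read off from the dual factorization $y=2^m r(1+\epsilon)$ with $2^m\le y<2^{m+1}$. Your route replaces the element $2^m$ by $v(2^m)$ and must then recover what was discarded; the paper's route is shorter because $M$ already supplies representatives for all of $A\oplus\R$ inside $K$.
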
 

\begin{proof}   Let   $\mu$ denote the maximal ideal of infinitesimals of $\R((G))$ and  
$E_0:\mu\rightarrow 1+\mu$ be given by 
$$\epsilon\mapsto \sum_{n=0}^\infty
 \frac{\epsilon^n}{n!}.$$  The following properties of $E_0$ are well-known, see, for example \cite{DMM}:

\begin{itemize}
\item[(i)] $E_0$ is well-defined (by Lemma~\ref{L1.1}); 
\item[(ii)]  $E_0(x+y)=E_0(x)E_0(y)$;
\item[(iii)]  $E_0$ is surjective with inverse
 $$l(1+\epsilon)=\sum_{n=1}^\infty (-1)^{n+1}\epsilon^n.$$
\end{itemize}
\medskip
Since  $M\models$ PA   there is a definable function $m\mapsto 2^m$ on the positive 
elements of $M$ that extends 
exponentiation on the natural numbers and satisfies $2^{m+n}=2^m2^n$.  For $m\in M$ with $m<0$
put $2^{m}=1/2^{-m}$.  

We define an exponential function on $\R((G))$ as follows.  For every nonnegative $x\in \R((G))$ 
there is an element $m\in M$ such that $m\le x<m+1$.  Let $x=m+r+\epsilon$ where $r\in\R$,
$0\le r<1$ and $\epsilon\in \mu$.   Define 
$$E(x)= 2^m2^rE_0(\epsilon \ln 2).$$ 
Suppose that $x,y\in \R((G))$ where 
$x=m+r+\epsilon$ and $y=n+s+\delta$.  Then
$$x+y=\begin{cases} (m+n)+(r+s)+(\epsilon+\delta) & \mbox{if $(r+s+\epsilon+\delta<1)$} \\ 
(m+n+1)+(r+s-1)+ (\epsilon+\delta) & \mbox{otherwise}
\end{cases}$$
and
$$E(x+y)= \begin{cases}  2^{m+n}2^{r+s}+E_0(\ln 2(\epsilon+\delta)) & \mbox{if $(r+s+\epsilon+\delta<1)$} \\ 
\noalign{\smallskip}
2^{m+n+1}2^{r+s-1}+ E_0(\ln 2(\epsilon+\delta)) & \mbox{otherwise.}
\end{cases}$$
In either case, $E(x+y)=E(x)E(y)$. For $x<0$, define $E(x)=1/E(-x)$. 

Now let $y\in \R((G))$ such that $y\ge 1$. We find an $x\in \R((G))$ with $E(x)=y$. As $y\ge 1$ there is $m\in M$ such that $2^m\le y<2^{m+1}$ then we can find $r\in \R$ with  $1\le r< 2$ and $\epsilon\in \mu$ such that
$y=2^mr(1+\epsilon)$.  Set 
$$x=m+\ln r +\frac{l(1+\epsilon)}{\ln 2}.$$
Then $0\le \ln r< 1$ and $E(x)=y$.  If $0<y<1$, there is $x\in \R((G))$ such that $E(x)=1/y$ and thus $E(-x)=y$.  Thus $E$ is a surjective homomorphism of the additive group onto the multiplicative group of positive elements.
\end{proof}

\begin{Corollary} There is an $\aleph_1$-saturated real closed field such that no integer part is a model of {\rm PA}.
\end{Corollary}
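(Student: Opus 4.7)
The plan is to combine the two preceding lemmas with a nonexistence theorem for exponentials on nontrivial Hahn series fields. First I would choose $G$ to be an $\aleph_1$-saturated nontrivial divisible ordered abelian group, e.g.\ an $\aleph_1$-saturated elementary extension of $(\Q,+,<)$, which exists by standard saturation arguments applied to the (complete) theory of nontrivial divisible ordered abelian groups. The preceding lemma then gives that $K = \R((G))$ is $\aleph_1$-saturated.

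Next, suppose toward a contradiction that some integer part $M\subset K$ were a model of PA. The previous lemma produces a surjective group homomorphism $E:(K,+)\to(K^{>0},\cdot)$. Inspecting its construction, $E$ is automatically order-preserving: its restriction to $M$ is the PA-exponential $m\mapsto 2^m$, which is monotone on $M$, and the factors $2^r$ and $E_0(\epsilon\ln 2)$ coming from the real and infinitesimal parts of the decomposition $x=m+r+\epsilon$ lie in intervals that preserve this monotonicity. Hence $E$ is in fact an order isomorphism between the additive group and the positive multiplicative group of $K$---a genuine exponential function on $K$ in the standard sense.

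The main obstacle, and essentially all the substantive content beyond the preceding lemmas, is ruling out such an exponential on $\R((G))$. This is supplied by the theorem of Kuhlmann, Kuhlmann, and Shelah on exponentiation in power series fields, which asserts that no nontrivial Hahn series field $\R((G))$ admits an exponential: the rigid lexicographic layering of Hahn series forces any would-be exponential to identify the value group $G$ with a subgroup of $K^{>0}$ whose algebraic structure no nontrivial $G$ can realize. Invoking this theorem contradicts the existence of $M$, so no integer part of $K$ can satisfy PA even though $K$ itself is $\aleph_1$-saturated. This yields the corollary.
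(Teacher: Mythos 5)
Your argument is correct and follows the paper's own route exactly: take $G$ to be an $\aleph_1$-saturated (nontrivial) divisible ordered abelian group, conclude from the two preceding lemmas that $\R((G))$ is $\aleph_1$-saturated and that a PA integer part would yield a surjective homomorphism from its additive group onto its multiplicative group of positive elements, and then invoke the Kuhlmann--Kuhlmann--Shelah theorem to rule this out. The paper applies the KKS result directly to that surjective homomorphism, so your additional (somewhat sketchy) verification that $E$ is order-preserving, while true, is not needed for the contradiction.
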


\begin{proof}  Kuhlmann, Kuhlmann, and Shelah \cite {KKS} show that no Hahn field $\R((G))$ can support such an exponential.
\end{proof}

\medskip
Using further results from  \cite{KKMZ}, it is easy to extend the corollary to show for all uncountable $\kappa$ that $\kappa$-saturation is insufficient to guarantee existence of an integer part that is a model of PA. 

Refinements of the results in this section can be found in \cite{CDK}.

\section{Integer Parts v. Resplendence} \label{resp}

We next show that the real closure of an uncountable model of PA need not be resplendent. Recall that a linear order is 
$\omega_1$-like, if it is uncountable but every proper initial segment is countable.  

\begin{Proposition} \label{notresplendent}
If $\M$ is an $\omega_1$-like model of PA then $R(\M)$ is not resplendent.
\end{Proposition}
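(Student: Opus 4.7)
The plan is to argue by contradiction. Assume $R(\M)$ is resplendent, and derive a contradiction by producing, via resplendence, a countable cofinal subset of $R(\M)$, whereas $R(\M)$ in fact has cofinality $\omega_1$. The latter follows because every element of $R(\M)$ is algebraic over $\M$ and hence bounded by some element of $\M$ (via a Cauchy-type bound in the coefficients of its minimal polynomial), so $\M$ is cofinal in $R(\M)$, and thus the cofinality of $R(\M)$ equals $\mathrm{cof}(\M)=\omega_1$.

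The key technical observation is that every PA integer part $U$ of $R(\M)$ is itself $\omega_1$-like: $|U|=\aleph_1$ since $U$ is cofinal in $R(\M)$; and for any $u_0\in U$, picking $m\in\M$ with $u_0<m$, the $\M$-floor function $x\mapsto\lfloor x\rfloor_\M$ restricts to a strictly increasing (hence injective) map $U\cap[0,m]\to \M\cap[0,m]$ (strict monotonicity because consecutive elements of $U$ differ by at least $1$, so if $u_1<u_2$ in $U$ then $\lfloor u_2\rfloor_\M\ge \lfloor u_1+1\rfloor_\M=\lfloor u_1\rfloor_\M+1$); since $\M\cap[0,m]$ is countable by the $\omega_1$-likeness of $\M$, so is $U\cap[0,m]$, and hence so is $\{u\in U:u<u_0\}$.

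Now fix a nonstandard parameter $c\in\M$ and expand the language of real closed fields by adding a unary predicate $U$, a unary predicate $P$, and a binary predicate $\beta$. Let $T$ be the recursive theory in this expanded language asserting: (i) $U$ is a discretely ordered PA integer part of $K$ containing $c$, using the PA axiom schema relativized to $U$; (ii) $\beta$ is the graph of a bijection from $\{u\in U:u<c\}$ onto $P$; and (iii) $P$ is cofinal in $K$. Consistency of $T\cup\Th(R(\M),c)$ is witnessed by any countable nonstandard model $\M^*$ of PA together with any nonstandard $c^*\in\M^*$: since $\Th(R(\M))=\mathrm{RCF}$ is complete and the type over $\emptyset$ of any nonstandard element of a real closed field is uniquely determined by the schema $\{x>n:n\in\mathbb{N}\}$, one has $(R(\M^*),c^*)\equiv (R(\M),c)$; interpret $U^*=\M^*$, take $P^*$ to be any countable cofinal sequence in the countable field $R(\M^*)$, and pick $\beta^*$ to be any bijection between the two countable sets $\{u\in\M^*:u<c^*\}$ and $P^*$.

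By resplendence, $R(\M)$ admits an expansion $(R(\M),c,U,\beta,P)\models T$. Applying the key observation, $U$ is $\omega_1$-like, so $\{u\in U:u<c\}$ is countable, and via the bijection $\beta$ so is $P$. But $P$ is cofinal in $R(\M)$, which forces $\mathrm{cof}(R(\M))=\omega$, contradicting the first paragraph. The main place to take care is the second paragraph, where the $\M$-floor argument is needed to establish that even a PA integer part $U\neq\M$ inherits $\omega_1$-likeness from $\M$; the rest is a routine transfer from a countable witnessing model via resplendence.
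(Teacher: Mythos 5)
Your proof is correct and follows essentially the same route as the paper: the floor-map argument showing that every integer part of $R(\M)$ inherits $\omega_1$-likeness from $\M$ is exactly the paper's observation that any two integer parts of a real closed field are order isomorphic, and your resplendence step (producing an integer part with a countable initial segment in bijection with a cofinal set) is a worked-out version of the paper's claim that a resplendent field has an integer part with large initial segments. Your detour through cofinality rather than cardinality of initial segments is a cosmetic variation; the argument is sound.
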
 

\begin{proof}
Observe first that if $K$ is a real closed field and $\M$ and $\N$ are integer parts then $(M,<)$ is order isomorphic to $(N,<)$ via the map that sends an element of $M$ to its integer part relative to $\N$. It is easy to see that if $K$ is resplendent then it has an integer part with initial segments having cardinality $|K|$. 
\end{proof}

The next statement provides information about the value group (under the standard valuation) of a resplendent real closed field. This contrasts with Corollary~\ref{omegaonevaluegp} below. 
  
\begin{Proposition}\label{biginitsegment} Let $K$ be a resplendent real closed field. Then there is a convex subring $O$
such that $|k_O|=|K|$ and $(k_O,+,<)\iso \Gamma_O$.  In particular, if $\Gamma$ is the
value group of $K$ under the standard valuation, then $\Gamma$ has bounded intervals of cardinality $|K|$.
\end{Proposition}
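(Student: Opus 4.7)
The plan is to use resplendence to adjoin an exponential function to $K$, then use it to construct the required convex subring. Consider the language $L^* = L \cup \{E\}$ and the finite theory $T^*$ asserting that $E$ is an order-preserving group isomorphism from $(K,+,<)$ to $(K^{>0},\cdot,<)$. Since any model of $\Th(\R_{\exp})$ witnesses consistency of $T^* \cup \mathrm{RCF}$, and $\Th(K) = \mathrm{RCF}$ is complete, $T^* \cup \Th(K)$ is consistent. By resplendence, $K$ expands to $(K,E) \models T^*$. Since $K$ is nonarchimedean (resplendent implies recursively saturated, which for an infinite real closed field forces nonarchimedean, as the recursive type $\{v > n : n \in \N\}$ is consistent hence realized), we may further use resplendence with a positive infinite parameter $a \in K$ to arrange $E(1)$ itself to be positive infinite (for instance by rescaling $E$ to $x \mapsto E(ax)$).

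Set $c = E(1)$ and let $O = \{x \in K : |x| \leq c^n \text{ for some } n \in \N\}$, a convex subring strictly containing the standard valuation ring $\mathcal{O}_0$. On the value group side, $\log = E^{-1}$ identifies $\Gamma_O$ with $K/\mathcal{O}_0$ as ordered abelian groups via $v_O(x) \mapsto -\log|x| + \mathcal{O}_0$, since the units of $O$ are exactly the elements whose $\log$ lies in $\mathcal{O}_0$. The heart of the argument is the matching isomorphism $(k_O,+,<) \iso K/\mathcal{O}_0$. Here $E$ restricts to a group isomorphism $(\mathcal{O}_0,+) \iso ((O^\times)^{>0},\cdot)$ carrying $\mathfrak{m}_O$ onto $1 + \mathfrak{m}_O$, which descends to $(\mathcal{O}_0/\mathfrak{m}_O,+) \iso (k_O^{>0},\cdot)$. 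The main obstacle is extending this partial correspondence to the full additive isomorphism $(k_O,+,<) \iso K/\mathcal{O}_0$: because $E$ does not send $O$ into $O$, one cannot directly push down an exponential to $k_O$; instead one must exploit the ``log-atomic'' structure on $K$ induced by $E$, together with the specific form of $O$ as the convex hull of the powers of $c$, to identify every class of $k_O$ with a coset in $K/\mathcal{O}_0$. Once this is done, the cardinality equality $|k_O| = |K|$ follows from $|K/\mathcal{O}_0| = |K|$, and composing the two identifications gives $(k_O,+,<) \iso \Gamma_O$.

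For the ``in particular'' clause: writing $\Gamma_O = \Gamma/\Gamma_0$ for the convex subgroup $\Gamma_0 = v(O^\times)$, the iso $(\Gamma_O,+,<) \iso (k_O,+,<)$ endows $\Gamma_O$ with the additive ordered group of a real closed field of cardinality $|K|$, which as usual contains bounded intervals of cardinality $|K|$ (e.g., the image of $[-1,1]$ in $k_O$). Since $\Gamma_O \neq 0$, $\Gamma_0$ is a proper convex subgroup of $\Gamma$ and hence bounded in $\Gamma$; lifting a bounded interval of $\Gamma_O$ along the quotient map produces a bounded interval of $\Gamma$ of cardinality at least $|K|$.
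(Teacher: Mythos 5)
Your route through an exponential is genuinely different from the paper's, but it has a real gap exactly where you flag ``the main obstacle'': the isomorphism $(k_O,+,<)\cong K/\mathcal{O}_0$ is never proved, only announced (``once this is done\dots''). Nothing in your theory $T^*$ --- which says only that $E$ is an order-preserving isomorphism $(K,+)\to(K^{>0},\cdot)$ --- supports even the preliminary steps: the claim that $E$ carries $\mathfrak{m}_O$ onto $1+\mathfrak{m}_O$ already requires growth axioms such as $1+x\le E(x)\le (1-x)^{-1}$ near $0$, which $T^*$ does not contain; and with or without such axioms it is far from clear that, for your particular $O$ (the convex hull of the powers of $c=E(1)$), the ordered group $(k_O,+,<)$ is isomorphic to $K/\mathcal{O}_0$ in an arbitrary model: the two are nontrivial divisible ordered abelian groups, hence elementarily equivalent, but that is nowhere near an isomorphism for uncountable $K$. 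The cardinality claim $|K/\mathcal{O}_0|=|K|$ is likewise asserted without argument and is not automatic (an injection of $K$ into $K/\mathcal{O}_0$ has to come from somewhere; maps like $x\mapsto E(Nx)$ fail on pairs whose difference is small compared with $1/N$). So as written you establish the easy identification $\Gamma_O\cong K/\mathcal{O}_0$ and the ``in particular'' clause, but neither of the two main assertions.

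The repair is to let resplendence carry the whole conclusion rather than building $O$ by hand, which is what the paper does. The statement is pseudo-first-order: adjoin a unary predicate for $O$ and function symbols $f\colon K\to O$ and $g\colon K^\times\to O$, and write a single sentence asserting that $O$ is a convex subring, that $f(x)-f(y)\notin\mathfrak{m}_O$ whenever $x\ne y$ (forcing $|k_O|\ge|K|$), and that $g$ induces an ordered-group isomorphism from $\Gamma_O$ onto $(k_O,+,<)$ (i.e.\ $g(x)\equiv g(y)\ (\mathrm{mod}\ \mathfrak{m}_O)$ iff $x/y\in O^\times$, $g(xy)\equiv g(x)+g(y)$, $g$ is monotone with respect to $v_O$, and $g$ is surjective modulo $\mathfrak{m}_O$). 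Consistency with $\Th(K)=\mathrm{RCF}$ is witnessed by a real closed subfield of $\R((t^{\R}))$ of cardinality $2^{\aleph_0}$ whose standard residue field is $\R$ and whose value group is $(\R,+)$, and resplendence then produces the required expansion of $K$ itself; no exponential is needed. Your derivation of the ``in particular'' clause from the main statement is correct and agrees with the paper's.
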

\begin{proof}
We can write a sentence in the language of ordered fields with an extra predicate for the valuation ring
and a function from the field into the valuation ring such that fields satistfying this additional sentence have
the above property.

Let $F$ be a real closed subfield of $\R((t^\R))$ of cardinality $2^{\aleph_0}$
such that under the standard valuation the residue field is $\R$ and the value group is $(\R,+)$.
Then $F$ has the properties described in the Proposition.  Since this is true in some real closed field, it holds in every resplendent real closed field.

The value group $\Gamma_O$ has bounded intervals of cardinality $|K|$.  Since $\Gamma_O$ is 
a quotient of $\Gamma$ by a convex subgroup, the same is true for $\Gamma$.
\end{proof}

The following lemma is applied tacitly in the next section. 

\begin{Lemma}\label{convexvaluegp}  If $\M\ee_e\N$ are models of PA, then $v(R(\M))$ is a convex subgroup of $v(R(\N))$.
\end{Lemma}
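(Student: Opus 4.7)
The plan is to invoke Shepherdson's theorem (cited in the introduction) so that $\M$ and $\N$ are integer parts of $R(\M)$ and $R(\N)$ respectively, then to reduce valuations of arbitrary elements of $R(\N)$ to valuations of honest integers in $\N$, at which point the end-extension hypothesis does the rest.

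First I would check that $\M\subseteq\N$ prolongs to an ordered field inclusion $R(\M)\subseteq R(\N)$ and that the standard valuations are compatible---``finite'' being defined via $\Z$, which lies in both---so $v(R(\M))$ sits naturally as an ordered subgroup of $v(R(\N))$. For convexity, since subgroups are closed under negation, it suffices to show: if $x\in R(\M)^\times$ and $y\in R(\N)^\times$ satisfy $0\le v(y)\le v(x)$, then $v(y)\in v(R(\M))$. Replacing $x,y$ by their absolute values I may assume both are positive; the case $v(y)=0$ is immediate since $v(y)=v(1)$, so assume $v(y)>0$, i.e., $y$ is a positive infinitesimal. The hypothesis $v(y)\le v(x)$ then unpacks to $x\le my$ for some standard $m\in\Nn$, which also forces $x$ to be infinitesimal.

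The core step uses Shepherdson to discretize: choose $n\in\N$ with $n\le 1/y<n+1$ and $n'\in\M$ with $n'\le 1/x<n'+1$. Since $1/y$ and $1/x$ each exceed every standard integer, $n$ and $n'$ are nonstandard, so in particular $n\ge 1$, and the bracketing $n\le 1/y\le 2n$ yields $v(1/y)=v(n)$. From $x\le my$ one obtains $1/y\le m/x\le m(n'+1)$, hence $n\le m(n'+1)$. Because $m(n'+1)\in\M$ and $\M$ is an initial segment of $\N$ by the end-extension hypothesis, this bound forces $n\in\M$. Therefore $v(n)\in v(R(\M))$ and $v(y)=-v(n)\in v(R(\M))$, as desired.

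I do not foresee any serious obstacle: essentially all the content is the slogan ``valuation equals valuation of the integer part,'' which reduces the problem to the trivial end-extension observation that an element of $\N$ dominated by an element of $\M$ already lies in $\M$. The only care required is in the reduction to $x,y>0$ and $v(y)\ge 0$, which is lossless precisely because $v(R(\M))$ is already known to be a subgroup.
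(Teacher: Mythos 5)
Your proof is correct and is essentially the paper's argument: both reduce the question to integers via the integer-part property ($v$ of an infinite element equals $v$ of its integer part in $\N$) and then invoke the end-extension hypothesis to place that integer relative to $\M$. The only difference is packaging --- you prove the direct implication (an element of $v(R(\N))$ squeezed between $0$ and an element of $v(R(\M))$ lies in $v(R(\M))$), whereas the paper argues contrapositively that any $g \in v(R(\N))\setminus v(R(\M))$ lies entirely above or below $v(R(\M))$.
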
 

\begin{proof} Let $g\in v(R(\N))\setminus v(R(\M))$.  Without loss of generality, $g<0$, i.e.,
there is an infinite element $x\in R(\N)$ with $v(x)=g$. Since $\N$ is an integer-part of $R(\N)$,  there
is some $n\in \N$ such that $|x-n|<1$, and, as $x$ is infinite, $v(x)=v(n)$.  From $\M\ee_e\N$ it follows that $n>R(\M)$,
and hence $g<v(R(\M))$.
\end{proof}

Lemma~\ref{convexvaluegp} yields the following corollary. Combined with Proposition~\ref{biginitsegment} it gives a second proof of Proposition~\ref{notresplendent}. 

\begin{Corollary}\label{omegaonevaluegp}
Let $\M$ be an $\omega_1$-like model of {\rm PA}. Then $v(R(\M))$
is $\omega_1$-like, i.e., for any $g>0$, the set $\{h\in v(R(\M)): |h|<g\}$ is countable. 
\end{Corollary}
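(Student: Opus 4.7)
The plan is to represent every element of $v(R(\M))$ concretely as $\pm v(m)/n$ for some positive infinite $m\in\M$ and some positive integer $n$, and then reduce bounded subsets of valuations to bounded initial segments of $\M$, where the $\omega_1$-like hypothesis furnishes countability.

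First I would establish the representation. Because $R(\M)$ is the real closure of $Q(\M)$, the extension $R(\M)/Q(\M)$ is algebraic, so $v(R(\M))/v(Q(\M))$ is torsion; and because $R(\M)$ is real closed, $v(R(\M))$ is divisible. Hence $v(R(\M))$ is the divisible hull of $v(Q(\M))$. Now $v(Q(\M))$ is generated as a group by $\{v(m):m\in\M,\,m\neq 0\}\subseteq(-\infty,0]$, and because $\M$ is an integer part of $R(\M)$, for any positive $a,b\in\M$ with $v(a/b)<0$ the integer part of $a/b$ lies in $\M$ and has valuation $v(a/b)$. Therefore every negative element of $v(Q(\M))$ equals $v(m)$ for some positive infinite $m\in\M$, from which the desired representation follows.

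Next, I fix $g>0$ in $v(R(\M))$ and choose a representation $g=-v(m_0)/k$ with $m_0\in\M$ positive infinite and $k\ge 1$. By symmetry it suffices to show that $\{h\in v(R(\M)):0<h<g\}$ is countable. Any such $h=-v(m)/n$ satisfies $-v(m)/n<-v(m_0)/k$, which rearranges to $v(m^k)>v(m_0^n)$; this asserts that $m_0^n/m^k$ is infinite in $R(\M)$, so in particular $m\le m^k<m_0^n$ in $\M$. Thus for each fixed $n\ge 1$, the positive infinite $m$ that contribute lie in the proper initial segment $\{m'\in\M:1\le m'<m_0^n\}$, which is countable by the $\omega_1$-like hypothesis. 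Summing over $n\in\Nn$ yields countably many values of $h$, as required.

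The main obstacle I anticipate is the first step: identifying $v(R(\M))$ with the divisible hull of $v(Q(\M))$ together with the refinement that negative elements of $v(Q(\M))$ are already valuations of elements of $\M$. These are standard facts about value groups of algebraic extensions, divisibility in real closed fields, and the defining property of integer parts, but they must be carefully stitched together. Once established, the remainder of the argument is essentially arithmetic, using only that $\M$ has countable proper initial segments.
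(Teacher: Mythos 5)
Your proof is correct, but it takes a genuinely different route from the paper's. The paper derives the corollary from Lemma~2.3 (\ref{convexvaluegp}): writing the $\omega_1$-like model $\M$ as a continuous union of countable elementary initial segments $\M_\alpha$, each $v(R(\M_\alpha))$ is a countable \emph{convex} subgroup of $v(R(\M))$, so any $g>0$ falls into some $v(R(\M_\alpha))$ and convexity traps $\{h:|h|<g\}$ inside that countable group. You instead avoid both the chain decomposition and the convexity lemma by computing the value group explicitly: $v(R(\M))$ is the divisible hull of $v(Q(\M))$ (torsion quotient for the algebraic extension $R(\M)/Q(\M)$, plus divisibility from real closedness), and every negative value in $v(Q(\M))$ is already $v(m)$ for a positive infinite $m\in\M$ via the integer-part property; the inequality $0<-v(m)/n<-v(m_0)/k$ then forces $m<m_0^n$, reducing everything to countability of proper initial segments of $\M$. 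All the steps check out, including the sign conventions ($v(x)<v(y)$ iff $y/x$ is infinite) and the passage from $v(m^k)>v(m_0^n)$ to $m^k<m_0^n$. What the paper's route buys is reuse of Lemma~2.3, which it needs anyway in Section~3, and a proof that works uniformly once that lemma is in place; what your route buys is a self-contained argument and a sharper structural fact --- that $v(R(\M))$ is the divisible hull of $\{v(m):m\in\M,\ m\neq 0\}$ --- which is close in spirit to the paper's later observation that $v(R(\M))$ is order-reversing isomorphic to $(\M,+)/\Z$.
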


\section{Value groups of $\omega_1$-like models}\label{VG-sec} 

Let $G$ be a divisible ordered abelian group.  For $a,b\in G$, put $r(a,b)=\{q\in \Q:
qb<a\}$.  The {\em standard system} of $G$ is $\SS(G)=\{r(a,b): a,b\in G\}$.    
In a real closed field $K$, we   define the standard system to be $\SS(K)=\{r(a):a\in K\}$, where $r(a)=\{q\in \Q: q<a\}$.  
In a nonstandard model of PA the standard system is the set of $r(a)=\{n: $ the $n^{\rm th}$-prime divides $a\}$.
 
The goal of this section is to prove  

\begin{Theorem}\label{VG}  If $\M$ and $\N$ are $\omega_1$-like models of {\rm PA} with the same standard system, then their value groups are isomorphic.
\end{Theorem}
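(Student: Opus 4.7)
The plan is a transfinite back-and-forth construction that builds the isomorphism $v(R(\M))\iso v(R(\N))$ in $\omega_1$ stages by matching countable convex pieces.

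First, I would express $\M = \bigcup_{\alpha<\omega_1}\M_\alpha$ and $\N = \bigcup_{\alpha<\omega_1}\N_\alpha$ as continuous unions of countable elementary initial segments $\M_\alpha \ee_e \M$ and $\N_\alpha \ee_e \N$. Such filtrations exist for any $\omega_1$-like model of PA by iteratively closing countable sets under Skolem functions inside suitable initial segments. By Lemma~\ref{convexvaluegp}, each $v(R(\M_\alpha))$ is a countable convex subgroup of $v(R(\M))$, and $v(R(\M)) = \bigcup_\alpha v(R(\M_\alpha))$; the analogous decomposition holds for $\N$.

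Next, by recursion on $\alpha<\omega_1$, I would construct strictly increasing continuous cofinal functions $f,g:\omega_1 \rightarrow \omega_1$ together with a continuous chain of group isomorphisms
$$\psi_\alpha : v(R(\M_{f(\alpha)})) \longrightarrow v(R(\N_{g(\alpha)})),$$
with $\psi_\alpha \subseteq \psi_\beta$ for $\alpha < \beta$ and $\psi_\lambda = \bigcup_{\alpha<\lambda}\psi_\alpha$ at limit $\lambda$. Then $\bigcup_{\alpha<\omega_1}\psi_\alpha$ will be the required isomorphism. Limit stages are immediate by continuity since value groups are continuous unions of their convex pieces.

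The main content is the successor stage, which is also the main obstacle. Given $\psi_\alpha$, I would first perform a preliminary countable bouncing between $\M$ and $\N$, using $\SS(\M) = \SS(\N)$, to find $f(\alpha+1) > f(\alpha)$ and $g(\alpha+1) > g(\alpha)$ so that $\SS(\M_{f(\alpha+1)}) = \SS(\N_{g(\alpha+1)})$. By the theorem of \cite{DKS}, the countable nonstandard PA models $\M_{f(\alpha+1)}$ and $\N_{g(\alpha+1)}$ then have isomorphic real closures, hence isomorphic value groups. The hard part is forcing this new isomorphism to extend $\psi_\alpha$, since the isomorphism supplied by \cite{DKS} is not a priori compatible with the previous stages. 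To handle this I would establish an extension form of the \cite{DKS} result for value groups, to the effect that for countable nonstandard PA models with matching standard systems and a partial isomorphism of their value groups, the isomorphism extends along matching countable enlargements; this should follow by running the \cite{DKS} back-and-forth with the prescribed partial isomorphism as initial data, using that the archimedean (and polynomial) equivalence classes of $\M^+$ are recognizable from $\SS(\M)$.
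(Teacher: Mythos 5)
Your overall architecture coincides with the paper's first proof: filter both models by countable elementary initial segments, observe via Lemma~\ref{convexvaluegp} that the value groups $G_\alpha=v(R(\M_\alpha))$ form a continuous chain of convex subgroups, and assemble the isomorphism as a continuous union of level-by-level isomorphisms, with all of the content concentrated in the successor step. (Your preliminary ``bouncing'' to match standard systems is unnecessary: every $\M_\alpha$ already has standard system $\SS(\M)=\SS(\N)$, since elements of the standard system are coded arbitrarily low in the nonstandard part.)

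The successor step, however, is a genuine gap, and it is exactly where the work lies. You cannot simply ``run the DKS back-and-forth with $\psi_\alpha$ as initial data'': that argument, like any recursive-saturation argument, manipulates finite partial maps and types over finite tuples, whereas $\psi_\alpha$ has countably infinite domain. An isomorphism between countably infinite substructures of two isomorphic countable recursively saturated structures need not extend to a global isomorphism, so your ``extension form of the DKS result'' is precisely the statement that must be proved. What makes it true here is special structure your sketch does not use. First, because $\M_\alpha\ee_e\M_{\alpha+1}$ is an end extension, $G_\alpha$ is convex in $G_{\alpha+1}$, so the type of a new element $a$ over a finite-dimensional extension $H=G_\alpha\oplus\Q h_1\oplus\cdots\oplus\Q h_n$ collapses to one of three kinds: $a$ above all of $H$; $a$ in a ``gap'' immediately above a coset $g+G_\alpha$; or $a$ determined by a cut in the rational coefficients $\bar m$ with $\sum m_ih_i<a$. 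Second, the gap types must be shown to be realized on the $\N$ side, and this is not a saturation phenomenon at all but an arithmetic one: one takes $b$ with $2^b\le a\le 2^{b+1}$ so that $v(1/b)$ lies above $G_\alpha$ but below every positive element of $H$ (the paper's Lemma~\ref{gap}, resting on Lemma~\ref{coinitial}). Third, the coefficient cuts are recursive in elements of $\SS(\M)$, and realizing them on the other side needs that the value group of the real closure of a model of PA is $\SS(\M)$-saturated (Corollary~\ref{recsatVG}), which is finer than mere recursive saturation. These three points constitute the paper's Lemma~\ref{VGmain}, and none of them follows from the black-box countable isomorphism theorem you invoke. (Incidentally, the paper also gives a route that avoids the back-and-forth entirely: Harnik's theorem that $\omega_1$-like models of PA with the same standard system have isomorphic ordered additive groups, combined with the order-reversing isomorphism from $(\M,+)/\Z$ onto $v(R(\M))$ induced by writing $x=2^mr$ with $1\le r<2$.)
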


 We give a self contained  proof of Theorem \ref{VG} and also show that it follows from a result of Harnik on the structure of additive reducts of models of PA. 
Before turning to the proof, we collect several facts. 

\begin{Lemma}\label{StSystSat}  Let $G$ be a recursively saturated divisible ordered abelian group. Then:
\begin{itemize}
\item[{\rm (i)}] if $a,b,c\in G$, there is $d\in G$ such that $r(a,b)=r(d,c)$.

\item[{\rm (ii)}] $G$ is $\SS(G)$-saturated, i.e., any complete $n$-type realized in $G$ is in the Turing ideal generated by $\SS(G)$ and for any partial type $p(v,\bar w)$ recursive in an element of $\SS(G)$, if $\bar a\in G$ and $p(v,\bar a)$ is consistent with the elementary diagram of $G$, then $p$ is realized in $G$.\qed
\end{itemize}
\end{Lemma}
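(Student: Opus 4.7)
My plan for (i) is a direct appeal to recursive saturation. In the nondegenerate case $b, c > 0$, introduce the partial type
$$p(v; a, b, c) = \{(mc < nv) \leftrightarrow (mb < na) : m \in \Z,\ n \in \Z_{>0}\},$$
which is recursive in the parameters $a, b, c$ and says ``$v/c$ realizes the same Dedekind cut as $a/b$.'' Finite satisfiability is immediate: the cut $r(a, b)$ partitions any finite set of rationals $q_1 < \cdots < q_k$ into initial and final segments, and any $v$ in the nonempty interval $(q_j c, q_{j+1} c]$ (with the obvious boundary modifications when $j = 0$ or $j = k$) witnesses the corresponding finite conjunction. Recursive saturation then produces $d \in G$ realizing $p$, and by construction $r(d, c) = r(a, b)$.

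For the first clause of (ii), I would invoke quantifier elimination for the theory of divisible ordered abelian groups. Fix $b_0 > 0$ in $G$. By QE the complete type of $(a_1, \dots, a_n) \in G^n$ is determined by the truth values of the $\Z$-linear inequalities $\sum m_i a_i > 0$ for $\bar m \in \Z^n$. Each such inequality is decidable from the finite join $r(a_1, b_0) \oplus \cdots \oplus r(a_n, b_0)$ of elements of $\SS(G)$: these cuts encode the ``ratios'' $\alpha_i = a_i / b_0$ as Dedekind cuts, and the sign of $\sum m_i a_i$ is then read off from sufficiently close rational approximations to $\sum m_i \alpha_i$ (with any $\Z$-linear dependencies detected directly from the cut data). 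Hence $\tp(a_1, \dots, a_n)$ lies in the Turing ideal generated by $\SS(G)$.

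For the second clause, the strategy is to replace the oracle $X \in \SS(G)$ by two fresh parameter variables $x, y$ standing for any $a, b \in G$ with $r(a, b) = X$; the oracle query ``$n \in X$?'' then becomes the DOAG-condition $n y < x$. For each formula $\phi(v, \bar w)$ and each potential accepting computation $C$ of the oracle machine deciding $p$ on input $\phi$, let $\theta_{\phi, C}(x, y)$ be the conjunction of the oracle-query conditions dictated by $C$, and define the recursive partial type
$$q(v, \bar w, x, y) = \{\theta_{\phi, C}(x, y) \to \phi(v, \bar w) : \phi,\ C\}.$$
Specializing $(x, y) = (a, b)$ with $r(a, b) = X$ makes $q$ logically equivalent to $p$, so finite satisfiability transfers and recursive saturation of $G$ supplies a realizer $d$ of $q(v, \bar a, a, b)$, which then realizes $p(v, \bar a)$.

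The most delicate point is to check that the coding in the second clause is tight: $\theta_{\phi, C}(a, b)$ must hold in $G$ for some accepting computation $C$ precisely when $\phi \in p$, so that the implications in $q$ neither over- nor under-constrain the realizer. This is immediate from $r(a, b) = X$, since the DOAG-condition $n y < x$ at $(x, y) = (a, b)$ is by definition the membership predicate of $X$; the remainder is standard recursive-saturation bookkeeping.
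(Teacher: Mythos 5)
The paper states this lemma with a \qed and no argument, treating it as folklore, so there is no ``official'' proof to compare against; I can only assess your write-up on its own terms. Part (i) and the second clause of part (ii) are correct and are the standard arguments. For (i), your type is recursive, finite satisfiability follows from divisibility (hence density) of $G$, and a realization $d$ satisfies $r(d,c)=r(a,b)$; note that some hypothesis such as $b,c>0$ is genuinely needed for the statement to be literally true, since $r(a,b)$ is an initial segment of $\Q$ when $b>0$ and a final segment when $b<0$, so your ``nondegenerate case'' is really the only case in which the claim can hold, apart from the trivial cuts $\emptyset$ and $\Q$. For the second clause of (ii), replacing oracle queries ``$q\in r(a,b)$'' by the formulas $qb<a$ along finite accepting computation records, and packaging the resulting implications into a recursive type with two extra parameter variables, is exactly how one shows that recursively saturated structures realize types recursive in coded sets; your check that the implications neither over- nor under-constrain the realizer is the right one.

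The first clause of (ii) has a genuine gap: the type of $\bar a$ is in general \emph{not} computable from $r(a_1,b_0)\oplus\cdots\oplus r(a_n,b_0)$ for a fixed $b_0>0$. The group need not be Archimedean over $b_0$: if some $a_i$ is infinite over $b_0$ then $r(a_i,b_0)\in\{\emptyset,\Q\}$ carries no information, and if $\sum m_ia_i$ is a nonzero infinitesimal over $b_0$ then no rational approximation ever reveals its sign. Moreover, an exact dependence $\sum m_ia_i=0$ cannot be ``detected directly from the cut data'': confirming that a Dedekind cut is the cut of $0$ is not a halting oracle computation, and tuples with identical cut data over $b_0$ can satisfy different dependences. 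Concretely, recursive saturation yields $b_0>0$ and $\epsilon$ with $0<n\epsilon<b_0$ for all $n\in\Nn$, and (since $\SS(G)$ is a Scott set and therefore contains a bounded nonrecursive cut $\gamma\subseteq\Q$) an element $a>0$ with $\{q\in\Q : q\epsilon<a\}=\gamma$; then $\tp(\epsilon,a)$ computes $\gamma$, whereas $r(\epsilon,b_0)\oplus r(a,b_0)=\{q\le 0\}\oplus\{q\le 0\}$ is recursive. The repair is to code the type into a \emph{new} cut rather than to read it off the given ones: enumerate $\Z^n$ as $(\bar m^k)_{k\in\Nn}$ and use recursive saturation to realize the recursive type over $\bar a$ in variables $(v,w)$ asserting that $w>0$ and that, for every $K$ and every sign pattern $s\in\{-,0,+\}^{K+1}$, if $\bigl(\mathrm{sgn}\sum_i m^k_i a_i\bigr)_{k\le K}=s$ then $v$ lies in a prescribed interval $[\sigma_s w,(\sigma_s+8^{-K-1})w]$, the intervals for distinct patterns being separated by rational gaps. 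The cut $r(c,d)$ of a realization then computes all the signs, and these determine $\tp(\bar a)$ by quantifier elimination for divisible ordered abelian groups. So the conclusion of the first clause is true, but it requires this coding step, which your argument omits.
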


\begin{Lemma} If $K$ is a recursively saturated real closed field, then $v(K)$ is recursively saturated; indeed, $v(K)$ is $\SS(K)$-saturated.
\end{Lemma}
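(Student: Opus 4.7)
The plan is to realize a given type in $v(K)$ by lifting it to a corresponding type over $K$ and applying recursive saturation of $K$. Let $p(v, \bar w)$ be a partial type in the language of ordered abelian groups, recursive in some $s \in \SS(K)$, say $s = r(c)$ for $c \in K$, with $\bar a = (a_1, \dots, a_k) \in v(K)$ such that $p(v, \bar a)$ is consistent with $\Th(v(K), \bar a)$. Choose $\bar A \in K^{>0}$ with $v(A_i) = a_i$ for each $i$.

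By quantifier elimination for divisible ordered abelian groups, I may assume each formula of $p$ has the form $q_0 v + \sum q_i w_i \,R\, 0$ with $R \in \{<, >, =\}$ and rational coefficients. Formulas with $q_0 = 0$ only constrain $\bar a$ and can be dropped; equalities with $q_0 \ne 0$ pin $v$ to a specific element of $v(K)$, reducing the problem. For the remaining strict inequalities, after normalizing $q_0 = 1$, the formula $\phi : v < \sum r_i w_i$ becomes $v(x) < v(B_\phi)$ with $B_\phi = \prod_i A_i^{r_i} > 0$ (rational powers exist in the real closed field $K$), which unpacks as the partial type $\{x > 0\} \cup \{x > n B_\phi : n \in \N\}$ in $K$; the case $v > \sum r_i w_i$ is symmetric. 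Let $\Theta_\phi(x, \bar A)$ denote the collection of first-order formulas so obtained.

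To assemble $\bigcup_{\phi \in p} \Theta_\phi$ as a \emph{recursive} (not merely $r(c)$-recursive) type in $K$, I internalize the oracle: each query ``$q \in r(c)$?'' is the first-order formula ``$q < c$'' evaluated in $K$, and the oracle Turing machine deciding $p$ halts after finitely many queries on any input $\phi$. Simulating this machine, one produces effectively in $\phi$ a first-order formula $\psi_\phi(y)$ in the language of ordered fields such that $K \models \psi_\phi(c)$ if and only if $\phi \in p$. Set
\[
P(x, \bar A, c) \;=\; \{x > 0\} \,\cup\, \{\psi_\phi(c) \to \theta(x, \bar A) : \theta \in \Theta_\phi,\ \phi \text{ a formula of the above form}\},
\]
a recursive set of first-order formulas with parameters $\bar A, c$.

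Finite satisfiability of $P$ follows from consistency of $p$: conditionals with $\phi \notin p$ are vacuous, and the finitely many active ones translate a finite subtype of $p$ realized in $v(K)$ by some $g$, so any $x \in K$ with $v(x) = g$ satisfies them. Recursive saturation of $K$ then yields $x \in K$ realizing $P$, and by construction $v(x)$ realizes $p$ in $v(K)$. Plain recursive saturation of $v(K)$ is the special case in which the oracle is trivial. The main obstacle is the internalization step: one has to verify that the oracle computation on $\phi$ uses only finitely many oracle queries and that their accept/reject pattern can be coalesced, uniformly in $\phi$, into a single first-order formula $\psi_\phi(c)$ over $K$.
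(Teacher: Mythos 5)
Your core translation is exactly the paper's proof, which consists of the single observation that for positive $x,y_1,\dots,y_n\in K$ and rationals $m_i$ one has $v(x)<\sum m_i v(y_i)$ if and only if $x>n\prod y_i^{m_i}$ for all $n\in\Nn$; your schema $\{x>nB_\phi: n\in\Nn\}$ is in fact slightly more careful than the paper's statement of this equivalence. One small point first: quantifier elimination for divisible ordered abelian groups reduces a formula of $p$ to a finite \emph{Boolean combination} of conditions $v\mathrel{R}\sum q_i w_i$, i.e.\ to a finite union of intervals with endpoints in the $\Q$-span of $\bar a$, not to a single atomic condition. To handle the disjunctions you should pass to the cut $\{\bar m\in\Q^k:\sum m_i a_i<v\}$ determined by some completion of $p$ (this is exactly how the paper uses the lemma in its Lemma on extending $\sigma_\alpha$), or close $p$ under the implied interval conditions; this is routine but should be said.

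The internalization step you flag is a genuine gap as you have set it up: the oracle machine deciding $p$ from $s=r(c)$ is only guaranteed to halt when run with the \emph{true} oracle, so its query tree (branching on the yes/no answers) may well be infinite, and there need not exist any first-order formula $\psi_\phi(c)$, produced uniformly from $\phi$, equivalent to ``$\phi\in p$''. The standard repair is to avoid deciding membership altogether. Since $p$ is recursive in $s$, it is r.e.\ in $s$: fixing an enumeration $q_0,q_1,\dots$ of $\Q$, there is a recursive relation $R(\phi,\sigma,n)$ (``the machine accepts $\phi$ within $n$ steps using the finite oracle string $\sigma\in 2^n$'') such that $\phi\in p$ iff $R(\phi,s\restriction n,n)$ for some $n$. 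Now put into $P$ the conditionals $\theta_{\sigma,n}(c)\to\theta(x,\bar A)$ for \emph{every} pair $(\sigma,n)$ with $R(\phi,\sigma,n)$ and every $\theta\in\Theta_\phi$, where $\theta_{\sigma,n}(c)$ is the conjunction over $i<n$ of $q_i<c$ when $\sigma(i)=1$ and $q_i\ge c$ when $\sigma(i)=0$. This set of formulas is recursive outright; at the actual parameter $c$ the conditionals with incorrect $\sigma$ are vacuous, and the active ones assert precisely the translations of the formulas of $p$. Your finite-satisfiability argument then goes through verbatim (a finite consistent set of strict interval conditions is realized in the divisible group $v(K)$ itself, by density). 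With these two repairs your argument is the paper's one-line proof carried out in full.
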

\begin{proof}  We can transform a type $p$  over $v(K)$ to a  type $q$ over $K$, since, for all $n$ and elements $x,y_1,\dots,y_n>0$ of $K$, we have $v(x)<\sum m_i v(y_i) $
if and only if $x>\prod y_i^{m_i}$, where $m_1,\ldots ,m_n\in\Q$.
 \end{proof}

Appealing to Theorem~2.1 and Proposition~3.1 in \cite{DKS} we have 

\begin{Corollary}\label{recsatVG}  If $\M\models {\rm PA}$ and $K$ is the real closure of $\M$, then $v(K)$ is 
recursively saturated.  Indeed, $v(K)$ is $\SS(\M)$-saturated.
\end{Corollary}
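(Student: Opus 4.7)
The plan is to assemble the corollary from Theorem~2.1 and Proposition~3.1 of \cite{DKS} together with the preceding Lemma. First, Theorem~2.1 of \cite{DKS} gives that $K = R(\M)$ is recursively saturated whenever $\M$ is a nonstandard model of PA; the case in which $\M$ is standard is trivial since then $v(K) = \{0\}$.

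Second, the preceding Lemma, applied to the recursively saturated real closed field $K$, yields that $v(K)$ is $\SS(K)$-saturated. Third, Proposition~3.1 of \cite{DKS} identifies the standard system of $K$ as a real closed field with that of its PA integer part $\M$, that is, $\SS(K) = \SS(\M)$. Substituting into the previous step, $v(K)$ is $\SS(\M)$-saturated, which is precisely what the Corollary asserts.

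The argument is a direct assembly of the cited results and presents no significant obstacle. The nontrivial content of the proof is concentrated in the two invoked DKS results: the recursive saturation of $K$ (Theorem~2.1) and the agreement $\SS(K) = \SS(\M)$ of the standard systems (Proposition~3.1). Given these, the only remaining step is the effective syntactic translation from types over $v(K)$ to types over $K$ carried out in the preceding Lemma, which goes through routinely and upgrades from recursive saturation to $\SS(K)$-saturation without any additional work.
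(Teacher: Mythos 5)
Your proof is correct and follows essentially the same route as the paper, which simply cites Theorem~2.1 and Proposition~3.1 of \cite{DKS} together with the preceding Lemma without writing out the assembly. Your handling of the standard-model case is a reasonable (if minor) extra precaution.
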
 

We now fix some notation for the proof of Theorem~\ref{VG}. 

Let $\M$ and $\N$ be $\omega_1$-like models of {\rm PA} (with the same standard system) as in the statement of the theorem.   We can find continuous chains of  countable models 
$$\M_0\ee_e \M_1\ee_e\dots\ee_e \M_\alpha\ee_e\dots$$ 
and 
 $$\N_0\ee_e \N_1\ee_e\dots\ee_e \N_\alpha\ee_e\dots$$  
 with $\bigcup \M_\alpha=\M$ and $\bigcup\N_\alpha=\N$.
As elements of the standard system of a model of PA are coded arbitrarily low in the nonstandard part, we have 
$\SS(\M)=\SS(\M_0)$ is countable. Let $G_\alpha$ be the value group of $R(\M_\alpha)$ and 
$G^\prime_\alpha$ be the value group of $R(\N_\alpha)$.  

To prove the theorem we shall construct a continuous increasing sequence of isomorphisms
$\sigma_\alpha: G_\alpha\rightarrow G_\alpha^\prime$, where 
$\sigma_0\subset\sigma_1\subset\cdots\subset\sigma_\alpha\subset\cdots$. We require preliminary lemmas.

 \begin{Lemma}\label{coinitial} Let $G\subseteq H$ be divisible ordered abelian groups such that $G$ is convex in $H$, and 
 $H/G$ is nontrivial and finite dimensional over $\Q$. Then there exists an element $h\in H$ so that if $g\in H$ and
 $g>G$, then $mg>{h}$ for some   $m\in\Nn$.  Also, there is some $h\in H$ such that every element of $H$ is bounded above by $mh$ for some $m\in\Nn$.
 \end{Lemma}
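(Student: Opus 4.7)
The plan is to pass to the quotient $\bar H = H/G$, which inherits the structure of a divisible ordered abelian group (since $G$ is convex in $H$), and to exploit the key fact that $\bar H$ has only finitely many archimedean classes because it has finite $\Q$-dimension. Indeed, if $\bar a_1, \ldots, \bar a_k$ are positive elements of $\bar H$ representing distinct archimedean classes with $[\bar a_1] < [\bar a_2] < \cdots < [\bar a_k]$, then in any putative rational dependence $\sum q_i \bar a_i = 0$ with $q_j$ the last nonzero coefficient, the sum $\sum_{i<j} q_i \bar a_i$ has archimedean class strictly less than $[\bar a_j] = [q_j \bar a_j]$, contradicting equality of absolute values. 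Hence $k \leq \dim_\Q \bar H$, and the positive archimedean classes of $\bar H$ form a nonempty finite chain, possessing both a smallest and a largest element.

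For the first assertion, choose $h \in H$ whose coset $\bar h$ lies in the smallest positive archimedean class of $\bar H$. Given $g \in H$ with $g > G$, the coset $\bar g$ is positive in $\bar H$ and $[\bar g] \geq [\bar h]$, so $\bar h \leq m \bar g$ for some $m \geq 1$. Unpacking in $H$: this means either $h - mg \in G$, giving $h = mg + g_0$ for some $g_0 \in G$ with $g_0 < g$ (since $g > G$) and hence $h < (m+1)g$; or $h - mg < G$, which in particular forces $h < mg < (m+1)g$. Either way, $(m+1)g > h$, as required.

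For the second assertion, pick $h \in H$ with $\bar h$ in the largest positive archimedean class of $\bar H$; in particular $h > G$. For arbitrary $x \in H$: if $x \leq 0$ or $x \in G$, then $x < h$ either trivially or because $h > G$; otherwise $\bar x > 0$, so $\bar x \leq m \bar h$ for some $m \in \Nn$ by maximality of $[\bar h]$, which lifts (as in the first part) to $x \leq mh + g_0$ for some $g_0 \in G$, and then $g_0 < h$ yields $x < (m+1)h$.

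The principal step is the finiteness of the set of archimedean classes of $\bar H$, a clean consequence of finite $\Q$-dimension; the rest is routine bookkeeping transferring inequalities across the quotient map, facilitated by the observation that any $h > G$ in $H$ strictly dominates every element of $G$.
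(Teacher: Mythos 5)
Your proof is correct and rests on exactly the same key fact as the paper's: positive elements of $H/G$ lying in distinct archimedean classes are $\Q$-linearly independent, so finite dimensionality forces finitely many classes and hence the existence of a least and a greatest positive class. The paper states this contrapositively (an infinite strictly descending chain of archimedean classes above $G$ would contradict finite dimensionality) and leaves the independence argument implicit, whereas your direct version spells it out and moreover produces $h>G$ in the first assertion, which is the form actually invoked later.
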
 
 
 \begin{proof} Suppose not. Then we can find $h_0>h_1>\cdots$ in $H$ such that each $h_i>G$ but $h_{n}>mh_{n+1}$ for all $m\in \Nn$. But this contradicts the fact that $H/G$ is 
 finite dimensional.  
 
 The proof of the second assertion is similar.
 \end{proof} 
 
\begin{Lemma} \label{gap} Suppose $g_1,\dots,g_n\in G_{\alpha+1}\setminus G_\alpha$.
Let  $H$ be the divisible hull of $G_\alpha\cup\{g_1,\dots,g_n\}$.
Then there exists $g\in G_{\alpha+1}$ such that $g>G_{\alpha}$ but $g<h$ for all $h\in H$ such that $h>G_\alpha$.
\end{Lemma}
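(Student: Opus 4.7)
The plan is to use the PA structure of $\M_{\alpha+1}$, in particular the base-$2$ logarithm, to construct $g$ explicitly.

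First I would reduce to a concrete inequality. Lemma~\ref{coinitial}, applied to $G_\alpha\subseteq H$ (noting that $H/G_\alpha$ is nontrivial and finite-dimensional over $\Q$), yields an element $h^*\in H$ with $h^*>G_\alpha$ such that every $h\in H$ with $h>G_\alpha$ satisfies $mh>h^*$, equivalently $h>h^*/m$, for some $m\in\Nn$. Hence it suffices to produce $g\in G_{\alpha+1}$ with $g>G_\alpha$ and $mg<h^*$ for every $m\in\Nn$: for any such $h$, picking $m$ as above gives $g<h^*/m<h$.

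Next I would transfer the problem to $R(\M_{\alpha+1})$. Write $h^*=v(y^*)$ with $y^*\in R(\M_{\alpha+1})$ positive; then $v(y^*)>G_\alpha$ forces $y^*<r$ for every positive $r\in R(\M_\alpha)$, so $1/y^*>R(\M_\alpha)$. Setting $Y=\lceil 1/y^*\rceil\in\M_{\alpha+1}$ gives $v(Y)=-h^*$ and $Y>R(\M_\alpha)\supseteq\M_\alpha$.

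The key step is to take a logarithm: set $L=\lfloor\log_2 Y\rfloor\in\M_{\alpha+1}$. Since $\M_\alpha\prec\M_{\alpha+1}$, $\M_\alpha$ is closed under $m\mapsto 2^m$ and $m\mapsto 2^{2^m}$; so $Y>\M_\alpha$ forces $Y>2^{2^m}$ for every $m\in\M_\alpha$, whence both $L>\M_\alpha$ and $L>2^m$ for every $m\in\M_\alpha$. The first of these gives $L>R(\M_\alpha)$ (every element of $R(\M_\alpha)$ is bounded by some element of $\M_\alpha$), so $v(L)<G_\alpha$ and $g:=-v(L)$ satisfies $g>G_\alpha$. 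The second means $\log_2 L$ dominates every standard integer, so the PA-provable estimate ``$x^n<2^x$ once $x$ exceeds a standard threshold depending on $n$'' applies with $x=L$ to yield $L^n<2^L\le Y$ for every $n\in\Nn$. Translating to valuations, $n\,v(L)=v(L^n)>v(Y)=-h^*$, so $mg<h^*$ for every $m\in\Nn$, as required.

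The crux is the PA estimate $L^n<Y$: this is what makes $g=-v(L)$ land in a strictly smaller Archimedean class than $h^*$ rather than remain in the same class. It is precisely why one needs $\M_\alpha$ closed under the double exponential, not just single exponentiation, and it is the step that uses $\M_{\alpha+1}\models{\rm PA}$ rather than just the $\SS(\M)$-saturation of $G_{\alpha+1}$.
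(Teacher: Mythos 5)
Your proof is correct and is essentially the paper's own argument: both apply Lemma~\ref{coinitial} to get a coinitial element $h^*$ of $\{h\in H: h>G_\alpha\}$, realize it as $v(1/a)$ for some $a\in\M_{\alpha+1}$ above $\M_\alpha$, pass to $b\approx\log_2 a$, and check that $b>\M_\alpha$ while $b^m<a$ for all standard $m$, so that $g=v(1/b)$ works. The only difference is that you make explicit the PA estimate $x^n<2^x$ justifying $b^m<a$ (which the paper leaves implicit as $b<\sqrt[m]{a}$); your closing remark that closure of $\M_\alpha$ under the \emph{double} exponential is needed is superfluous, since $L>\M_\alpha$ already follows from closure under single exponentiation.
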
 
 \begin{proof}   By Lemma \ref{coinitial} there is $h\in H$ such that $h>G_{\alpha}$ and if $x\in H$ and $x>G_{\alpha}$, then $mx>h$ for some $m\in\Nn$.  We can find $a\in \M_{\alpha+1}\setminus \M_\alpha$ such that $v(1/a)=h$.  There is an element $b\in\M_{\alpha+1}$ such that $2^b\le a\le 2^{b+1}$.  Then $b>\M_\alpha$  and $ b<\sqrt[m] {a}$ for all nonzero $m\in\Nn$.  Thus $v(1/b)>G_{\alpha}$, but
 $v(1/b)<h/m$ for all nonzero $m\in \Nn$.  Let $g=v(1/b)$.
 \end{proof}
 
 \medskip The next lemma provides the main step in the proof of Theorem~\ref{VG}.
 
 \begin{Lemma}\label{VGmain}  Suppose 
 $\sigma_\alpha:G_\alpha\rightarrow G^\prime_\alpha$ is an isomorphism.  Then $\sigma_\alpha$ can be extended
 to an isomorphism $\sigma_{\alpha+1}:G_{\alpha+1}\rightarrow G_{\alpha+1}^\prime$.
  \end{Lemma}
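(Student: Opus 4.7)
The plan is to perform a countable back-and-forth construction. Since $\M_{\alpha+1}$ and $\N_{\alpha+1}$ are countable, $G_{\alpha+1}$ and $G^\prime_{\alpha+1}$ are countable divisible ordered abelian groups, both $\SS(\M)$-saturated by Corollary~\ref{recsatVG} (using $\SS(\M)=\SS(\N)$). Enumerate both groups, and build an increasing chain of partial isomorphisms $\sigma_\alpha = \tau_0 \subseteq \tau_1 \subseteq \cdots$ extending $\sigma_\alpha$, each $\tau_s$ of the form $\tau_s : G_\alpha \cup X_s \to G^\prime_\alpha \cup Y_s$ with $X_s, Y_s$ finite, alternating between ensuring each listed element of $G_{\alpha+1}$ appears in some $\dom(\tau_s)$ and each listed element of $G^\prime_{\alpha+1}$ in its image. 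Then $\sigma_{\alpha+1} = \bigcup_s \tau_s$ is the required extension.

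The core extension step reads: given $\tau : G_\alpha \cup \{g_1,\dots,g_n\} \to G^\prime_\alpha \cup \{g^\prime_1,\dots,g^\prime_n\}$ (a partial isomorphism extending $\sigma_\alpha$ with $\tau(g_i) = g^\prime_i$) and an element $g \in G_{\alpha+1}$, produce $g^\prime \in G^\prime_{\alpha+1}$ so that $\tau \cup \{(g, g^\prime)\}$ extends to a partial isomorphism. Let $H$ denote the divisible hull of $G_\alpha \cup \{g_1,\dots,g_n\}$ in $G_{\alpha+1}$, and $H^\prime$ the analogous hull on the $\N$-side; after thinning, one may assume the $g_i$ form a $\Q$-basis of $H$ over $G_\alpha$, so $\tau$ extends canonically to an order-isomorphism $\tilde\tau : H \to H^\prime$. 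If $g \in H$, write $g = c + q_1 g_1 + \cdots + q_n g_n$ uniquely and take $g^\prime = \tilde\tau(g) = \sigma_\alpha(c) + q_1 g^\prime_1 + \cdots + q_n g^\prime_n$; order-preservation is automatic.

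The substantive case is $g \notin H$. By Lemma~\ref{convexvaluegp}, $G_\alpha$ is convex in $G_{\alpha+1}$, so $g - h \notin G_\alpha$ for every $h \in H$; hence for each $(q_1,\dots,q_n) \in \Q^n$ either $g > \sum q_i g_i + G_\alpha$ or $g < \sum q_i g_i + G_\alpha$. This data is a cut in the finite-dimensional divisible ordered $\Q$-vector space $H/G_\alpha$, and it determines the type of $g$ over $H$. Such a cut is described by finitely many pieces of information: the archimedean class in $H/G_\alpha$ into which $g + G_\alpha$ falls (or an adjacent gap), together with, for each relevant class, a cut in $\Q$ locating the fine position of $g$ within that class. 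Each such cut of $\Q$ is realized by a pair of elements of $G_{\alpha+1}$ and so belongs to $\SS(G_{\alpha+1}) = \SS(\M)$. Translating via $\tilde\tau$ yields a type $p(v)$ over finitely many parameters from $G^\prime_{\alpha+1}$, recursive in $\SS(\M) = \SS(\N)$; the $\SS(\M)$-saturation of $G^\prime_{\alpha+1}$ (Corollary~\ref{recsatVG}) then produces a realization $g^\prime$.

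The main obstacle is verifying that $p$ is consistent in $G^\prime_{\alpha+1}$, which requires checking that every cut type occurring on the $\M$-side has a counterpart on the $\N$-side. Intermediate cuts within an existing archimedean class of $H^\prime/G^\prime_\alpha$ pose no difficulty once the archimedean skeleton is aligned. The boundary cases, where $g + G_\alpha$ sits in a new archimedean class strictly between two existing classes of $H/G_\alpha$, below them all but above $G_\alpha$, or above them all, are precisely handled by Lemma~\ref{gap} applied to the $\N$-chain $\N_\alpha \prec_e \N_{\alpha+1}$ (producing elements of $G^\prime_{\alpha+1}$ between $G^\prime_\alpha$ and any prescribed finitely-generated subgroup above it), while Lemma~\ref{coinitial} ensures the archimedean skeleton of $H^\prime/G^\prime_\alpha$ faithfully mirrors that of $H/G_\alpha$ under $\tilde\tau$ so that no spurious cofinality obstruction appears. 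Combined with $\SS(\M)$-saturation for the non-boundary cuts, these ingredients complete the extension step.
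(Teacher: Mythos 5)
Your overall strategy is the paper's: a back-and-forth between the countable groups $G_{\alpha+1}$ and $G'_{\alpha+1}$ over $\sigma_\alpha$, reducing to one-point extensions of a finite-dimensional $H\supseteq G_\alpha$, using convexity of $G_\alpha$ (Lemma~\ref{convexvaluegp}) to see that the type of $g\notin H$ over $H$ is controlled by its cut over the $\Q$-span of $g_1,\dots,g_n$, and then combining $\SS$-saturation with Lemmas~\ref{gap} and~\ref{coinitial}. All the right ingredients appear, but the assignment of tools to cases in your last paragraph is wrong in a way that would make the argument fail as written. Lemma~\ref{gap} produces only elements of $G'_{\alpha+1}$ lying strictly between $G'_\alpha$ and everything of $H'$ above $G'_\alpha$; it does not produce an element in a new archimedean class strictly between two existing classes of $H'/G'_\alpha$, nor one above all of $H'$. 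Fortunately neither of those cases needs it: in both, the transferred type is finitely satisfiable using $\Q$-multiples of the $\sigma(g_i)$ themselves (for the cofinal case, first use the second assertion of Lemma~\ref{coinitial} to get a single $h$ with the $mh$ cofinal in $H$, so the type is recursive in one parameter), and is therefore realized by exactly the $\SS$-saturation/recursive-saturation mechanism you reserve for ``intermediate cuts within an existing archimedean class.'' The one case that genuinely cannot be handled by $\SS$-saturation over a finite tuple is the one where $g$ is adjacent to a coset $s+G_\alpha$ with $s$ in the span (equivalently, the lower cut has a greatest element or the upper cut a least element): there the transferred type has all of $G'_\alpha$ among its parameters, and only there is Lemma~\ref{gap} (after translating by $\sigma(s)$, negated for the mirror case) the correct tool. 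Your taxonomy by ``the archimedean class of $g+G_\alpha$'' also misses this case whenever $s\neq 0$, since then $g$ itself lies in an existing class.

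A smaller issue: you assert that the cut of $g$ in $H/G_\alpha$ decomposes into finitely many cuts of $\Q$, each of the form $r(a,b)$ and hence in $\SS(G_{\alpha+1})$. That reduction is plausible but unproved, and it is not needed: Lemma~\ref{StSystSat}(ii) together with Corollary~\ref{recsatVG} already gives that the complete type of $(g,g_1,\dots,g_n)$, and hence the set $\{\bar q\in\Q^n:\sum q_ig_i<g\}$, is recursive in an element of $\SS(G_{\alpha+1})=\SS(\M)=\SS(\N)$, which is exactly what is required to invoke $\SS$-saturation of $G'_{\alpha+1}$.
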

 
 \begin{proof}  The isomorphism $\sigma_{\alpha +1}$ is built via a back and forth construction between the countable groups 
 $G_{\alpha+1}$ and $G_{\alpha+1}^\prime$. For the initial step choose $a\in G_{\alpha+1}$ with $a>G_{\alpha}$ and 
 $b\in G^\prime_{\alpha+1}$ with $b>G^\prime_\alpha$.  Let $H=G_{\alpha}\oplus \Q a$ and define $\sigma:H\rightarrow G_{\alpha+1}^\prime$ by 
 $$\sigma (g+ma)=\sigma_\alpha(g)+mb.$$  
 Then $\sigma$ is an order preserving embedding extending $\sigma$. 
 
In general,  suppose we have $G_{\alpha}\subset H\subset G_{\alpha+1}$, where $H$ is a nontrivial finite dimensional extension of $G_\alpha$ and $\sigma:H\rightarrow G^\prime_{\alpha+1}$ extending $\sigma_\alpha$. It suffices to show that if 
$a\in G_{\alpha+1}\setminus H$ we can extend $\sigma$ to $H\oplus \Q a$. Without loss of generality $a>0$. There are several cases to consider.

\medskip 
\n {\bf Case 1.}  $a>H$.  

By Lemma \ref {coinitial} there exists $h\in H$ such the elements $mh$, for $m\in\N$ are cofinal in $H$.
Since $G_{\alpha+1}^\prime$ is recursively saturated by Corollary~\ref{recsatVG}, there is $b\in G_{\alpha+1}$ such that
$b>m\sigma(h)$ for all $m\in \Nn$.  We then can extend $\sigma$ by setting $\sigma(a)=b$.

\medskip
\n {\bf Case 2.} Suppose there is $g\in H$ such that $a>g+G_{\alpha}$, but $a<g+h$ for all $h\in H$ such that $h>G_0$. 
(The case in which $a<g+G_{\alpha}$ is similar).

Translating by $-g$, we may assume $g=0$.  Lemma \ref{gap} provides an element $b\in G_{\alpha+1}^\prime$
such that $b>G_\alpha^\prime$ and $b<\sigma(h)$ for all $h\in H$ with $h>G_\alpha$.  We then extend $\sigma$ by
setting $\sigma(a)=b$. 

To determine the remaining case requires some preliminary analysis. Let $h_1,\dots,h_n$ be a basis for $H$ over $G_{\alpha}$.  Put 
$$C=\left\{\sum m_ih_i: m_i\in\Q\mbox{\ and\ }\sum m_i h_i < a\right\}$$ 
and 
$$D=\left\{\sum m_ih_i: m_i\in\Q{\ and\ }\sum m_ih_i>a\right\}.$$ 
Since $G_\alpha$ is convex in $G_{\alpha +1}$, if $c\in C$ then $c+g<a$ for all $g\in G_\alpha$, and likewise if $d\in D$ then 
$d+g>a$ for all $a\in G_\alpha$.  If $D$ is empty, then we are in Case~1. If $C$ is empty or has a greatest element, or $D$ has a least element, then we are in Case~2.  Thus we are left with 
 
 \medskip
 \n {\bf Case 3.} $C$ does not have a greatest element and $D$ does not have a least element. 
 
 Then $\tp(a/H)$ is determined by $\tp (a/C\cup D)$. Let $C^*=\{\bar m\in \Q^n:\sum m_ih_i\in C\}$
 and $D^*=\{\bar m\in \Q^n:\sum m_ih_i\in D\}$. We now apply Lemma~\ref{StSystSat} and Corollary~\ref{recsatVG}. 
 Since $G_{\alpha +1}$ is $\SS(G_{\alpha +1})$-saturated, 
 $C^*,D^*$ are recursive in elements of $\SS(G_{\alpha +1})$.  Thus, as $G^\prime_{\alpha+1}$ is 
 $\SS(G_{\alpha +1})$-saturated,
 there is an element $b\in G'_{\alpha+1}$ such that $\sum m_i \sigma(h_i)<b \Leftrightarrow \bar m\in C^*$.
 Thus we can extend $\sigma$ by setting $\sigma(a)=b$.

 The the proof of the lemma is now complete. 
 \end{proof}
 
 \noindent{\em Proof of Theorem~\ref{VG}.\/}\enspace 
The construction of the required isomorphism of value groups is now an immediate consequence of Lemma \ref{VGmain}. 
\qed 

\begin{Remark}
 {\rm There are only $2^{\aleph_0}$ possible countable standard systems, so only $2^{\aleph_0}$
 possible value groups for the real closure of an $\omega_1$-like model of PA.  On the other hand for any completion $T\supseteq$ PA  and countable Scott set
 $S$ there are $2^{\aleph_1}$ non-isomorphic $\omega_1$-like models of $T$ with standard system $S$ all of whose real closures have the same value group.}
\end{Remark}

 We conclude  with a second proof of  Theorem \ref{VG}  that applies a result of Harnik  on models of Presburger arithmetic expandable to $\omega_1$-like models of PA.
 
 \begin{Theorem}[Harnik \cite{harnik}] If $\M$ and $\N$ are $\omega_1$-like models of {\rm PA} with the same standard system, then their ordered additive groups are isomorphic.
\end{Theorem}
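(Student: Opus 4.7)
The plan is to adapt the proof of Theorem~\ref{VG}, working with the additive Presburger reducts $(M,+,-,<,0,1,\{D_n\}_{n\geq 2})$ and $(N,+,-,<,0,1,\{D_n\}_{n\geq 2})$ of $\M$ and $\N$ in place of the value groups of their real closures; here $D_n(x)$ denotes the divisibility relation $n\mid x$. Fix continuous chains of countable elementary end-extensions $\M_0\ee_e\M_1\ee_e\cdots$ with $\bigcup_\alpha \M_\alpha=\M$, and similarly $\N_0\ee_e\N_1\ee_e\cdots$ with $\bigcup_\alpha\N_\alpha=\N$, so that $\SS(\M_\alpha)=\SS(\M)=\SS(\N)=\SS(\N_\alpha)$ for every $\alpha<\omega_1$. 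Each $\M_\alpha$ is countable and recursively saturated, hence its Presburger reduct is $\SS(\M)$-saturated in every finite sublanguage $\{+,-,<,0,1,D_2,\ldots,D_k\}$, and likewise for $\N_\alpha$.

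I would then build the isomorphism as the union of a continuous increasing chain of isomorphisms $\tau_\alpha:(M_\alpha,+,<,\ldots)\to(N_\alpha,+,<,\ldots)$, with limit stages taking unions and the successor step carried out by a back-and-forth between the countable Presburger reducts of $M_{\alpha+1}$ and $N_{\alpha+1}$, extending $\tau_\alpha$. By quantifier elimination for Presburger arithmetic in the divisibility language, the type of a new element $a\in M_{\alpha+1}\setminus H$ over a finitely generated Presburger extension $H$ of $M_\alpha$ is determined by (i) the cut $a$ makes in the divisible hull of $H$, and (ii) the standard-integer residues $a\bmod n$ for every $n\geq 2$. Datum (ii) can always be matched in $N_{\alpha+1}$ by the Chinese Remainder Theorem. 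For datum (i), one divides into the same three cases as in Lemma~\ref{VGmain}: $a$ above $H$; $a$ in a gap just above $M_\alpha$ inside the finite-dimensional extension (needing the straightforward Presburger analog of Lemma~\ref{gap}, produced by the map $b\mapsto 2^b$ as there); or $a$ realizing a genuine cut in the divisible hull of $H$.

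The hard part will be the third case, where $b\in N_{\alpha+1}$ must simultaneously realize a cut type recursive in an element of $\SS(\M)$ together with the infinitely many standard-integer congruence conditions. Fortunately these two families of conditions are compatible: in any model of Presburger every congruence class modulo every $n$ is represented in every cofinal interval, so the combined type remains consistent and recursive in $\SS(\M)$, and a single application of $\SS(\M)$-saturation of $N_{\alpha+1}$ produces the required $b$. The symmetric back direction is handled identically, and the union of the $\tau_\alpha$ gives the desired isomorphism of the Presburger reducts, from which the isomorphism of the ordered additive groups $(M,+,<)\cong(N,+,<)$ follows immediately by taking a further reduct.
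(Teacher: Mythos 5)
The paper does not actually prove this statement: it is quoted from Harnik \cite{harnik} and then \emph{used} to give a second derivation of Theorem~\ref{VG}. Your proposal runs the implication in the other direction, re-proving Harnik's theorem by transplanting the back-and-forth of Lemmas~\ref{coinitial}, \ref{gap} and \ref{VGmain} from the divisible value group to the discretely ordered Presburger reduct. That strategy is sound, and in view of the order-reversing isomorphism between $(\M,+)/\Z$ and $v(R(\M))$ at the end of Section~\ref{VG-sec} it is clear why it should be: beyond the cut analysis of Lemma~\ref{VGmain} the only new data are the congruences modulo standard integers and the ``algebraic'' case in which the new element $a$ lies in the divisible hull of the substructure $H$ already built (this case is absorbed by the predicates $D_n$, since $\tau$ preserves divisibility, but it deserves explicit mention alongside your three cases). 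Your consistency claim for combining a nontrivial cut with all the congruence conditions is correct for the right reason: if $a\in M_{\alpha+1}$ is not in the divisible hull of $H$, then $a$ is at more than standard distance from every point of that hull (otherwise, clearing denominators and using that $\M_\alpha\ee_e\M_{\alpha+1}$, one finds $a$ satisfies a linear equation over $H$), so the set of realizations of the cut is stable under adding standard integers and the residues can be prescribed freely.

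The one step that fails as written is your justification of $\SS$-saturation. You assert that each $\M_\alpha$ is recursively saturated and deduce saturation of its Presburger reduct from that. For an arbitrary $\omega_1$-like $\M$ the countable links of a filtration need not be recursively saturated (for instance $\M_0$ could be pointwise definable), and you cannot in general choose them to be, since any two continuous filtrations agree on a club. What you actually need --- and what is true --- is the exact additive analogue of Corollary~\ref{recsatVG}: the ordered additive (Presburger) reduct of \emph{any} nonstandard model of PA is recursively saturated, indeed $\SS(\M)$-saturated. This is proved just as in \cite{DKS} for real closures, by coding a recursive (or $\SS$-recursive) Presburger type inside the PA structure, evaluating its quantifier-free reductions with a definable truth predicate, and applying overspill to a nonstandard finite fragment. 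With that substitution, your argument goes through and recovers Harnik's theorem.
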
 

Theorem \ref{VG} now follows directly from

\begin{Lemma} If $\M\models {\rm PA}$, then there is an order reversing isomorphism between $(\M,+)/\Z$ and the value group of $R(\M)$.
\end{Lemma}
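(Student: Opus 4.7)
The plan is to write down an explicit candidate homomorphism $\phi \colon (\M,+) \to v(R(\M))$ by $\phi(m) = v(2^m)$, where $2^m$ is the PA-definable exponentiation on the nonnegative elements of $\M$, extended to negative $m \in \M$ by $2^{m} = 1/2^{-m} \in R(\M)$. I would first verify that $\phi$ is a group homomorphism, using $2^{a+b} = 2^a 2^b$ (provable in PA and hence holding in $\M$), and that it is order-reversing: $m_1 < m_2$ implies $2^{m_1} < 2^{m_2}$, which gives $v(2^{m_1}) > v(2^{m_2})$ under the standard valuation.

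Next I would identify the kernel. Since $2^m$ is a finite noninfinitesimal positive rational precisely when $m \in \Z$ (with $2^m$ infinite when $m > \Z$ and $2^m$ infinitesimal when $m < -\Z$), we have $\ker \phi = \Z$. Thus $\phi$ factors through an injective order-reversing homomorphism $(\M,+)/\Z \to v(R(\M))$. Along the way I would note that $(\M,+)/\Z$ is linearly ordered even though $\Z$ is not convex in $\M$, because two cosets $m_1 + \Z$ and $m_2 + \Z$ either coincide (when $m_2 - m_1$ is standard) or have a well-defined order determined by the sign of the nonstandard difference $m_2 - m_1$.

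The main step is surjectivity. Given $g \in v(R(\M))$, I would write $g = v(x)$ for some $x > 0$ in $R(\M)$ and, by replacing $x$ with $1/x$ if necessary, reduce to $x \ge 1$. Since $\M$ is an integer part of $R(\M)$, choose $k \in \M$ with $k \le x < k + 1$; either $k$ is standard and $g = 0 = \phi(0)$, or $v(x) = v(k)$ with $k$ nonstandard. In the latter case, working inside $\M$ and using PA, pick $n \in \M$ with $2^n \le k < 2^{n+1}$, so that $v(k) = v(2^n) = \phi(n)$. This exhibits $g$ in the image. I do not expect a genuine obstacle: once the map $m \mapsto v(2^m)$ is written down, the integer-part property together with PA exponentiation does all the work, and the only delicate point---the well-definedness of the order on $(\M,+)/\Z$---is immediate.
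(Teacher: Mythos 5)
Your proposal is correct and is essentially the paper's argument in the opposite direction: the paper defines the map from the value group to $(\M,+)/\Z$ via the unique decomposition $x=2^mr$ with $1\le r<2$, while you define the inverse $m\mapsto v(2^m)$ and verify kernel and surjectivity; the content (PA-definable base-$2$ exponentiation plus the integer-part property) is identical.
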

\begin{proof} For every $x\in R(\M)$ we can find unique $m\in\M$ and $r\in R(\M)$ such that
$x=2^mr$ and $1\le r<2$.   Note that \begin{itemize}\item if $1\le r,s<2$, then $v(2^mr)=v(2^ns)$ if and only if $m\equiv n \mod \Z$\item $v(2^mr \cdot 2^ns)=v(2^{m+n}rs)$;\item
$v(2^mr)>0$ if and only if $m<\Z$.\end{itemize}
\end{proof}

\section{Real Closures of $\omega_1$-like Models} \label{rcf}

 D'Aquino, Knight and Starchenko conclude in \cite{DKS}  that if $\M$ and $\N$ are countable models of PA with the 
same standard system, then their real closures are isomorphic.  In contrast we prove that  this fails badly for $\omega_1$-like models.
 
 \begin{Theorem}\label{main} Let $\M_0$ be a countable recursively saturated model of {\rm PA}.  There are $2^{\aleph_1}$ $ \omega_1$-like recursively saturated elementary  end extensions
 of $\M_0$ such that the real closures of any two are non-isomorphic.
 \end{Theorem}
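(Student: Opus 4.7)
The plan is to parameterize the desired family by $2^{\omega_1}$. For each $\eta\in 2^{\omega_1}$ I would build a continuous elementary chain
$\M_0=\M_\eta^0\ee \M_\eta^1\ee\cdots\ee \M_\eta^\alpha\ee\cdots$
of countable recursively saturated elementary end extensions of $\M_0$, with unions at limit stages, and set $\M_\eta=\bigcup_{\alpha<\omega_1}\M_\eta^\alpha$, which is automatically $\omega_1$-like and recursively saturated. At each successor stage $\alpha+1$ the bit $\eta(\alpha)$ will select between two prescribed countable recursively saturated elementary end extensions of $\M_\eta^\alpha$; the core task is to arrange these so that the selection is recoverable from $R(\M_\eta)$ up to isomorphism.

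For the two prescribed extensions, I would exploit the canonical structure of $\M_\eta^\alpha$: since any two countable recursively saturated models of $\Th(\M_0)$ with standard system $\SS(\M_0)$ are isomorphic, one can choose, uniformly in $\alpha$, a distinguished pair of orbits of cofinal $1$-types over $\M_\eta^\alpha$ that give rise to non-isomorphic pairs $(R(\M_\eta^\alpha),R(\M_\eta^{\alpha+1}))$ after real closure. Concretely, fix a nonrecursive cut in $R(\M_\eta^\alpha)$ coded by an element of $\SS(\M_0)$ and arrange that this cut is realized just above $R(\M_\eta^\alpha)$ inside $R(\M_\eta^{\alpha+1})$ precisely when $\eta(\alpha)=1$. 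Recursive saturation of $\M_\eta^{\alpha+1}$ can be preserved by suitable omitting-types bookkeeping that blocks the realization of the cut when $\eta(\alpha)=0$, while forcing all types recursive in $\M_\eta^\alpha$ to be realized.

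For the non-isomorphism half, suppose $f\colon R(\M_\eta)\to R(\M_\zeta)$ is an isomorphism. A standard closure argument, using that each $R(\M_\eta^\alpha)$ is a countable elementary substructure, produces a club $C\subseteq\omega_1$ such that $f$ restricts to an isomorphism $R(\M_\eta^\alpha)\iso R(\M_\zeta^\alpha)$ for all $\alpha\in C$. For $\alpha\in C$, the induced isomorphism of pairs $(R(\M_\eta^\alpha),R(\M_\eta^{\alpha+1}))\iso (R(\M_\zeta^\alpha),R(\M_\zeta^{\alpha+1}))$ transports the distinguishing cut and forces $\eta(\alpha)=\zeta(\alpha)$, so $\eta$ and $\zeta$ agree on a club. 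Fixing $\aleph_1$ pairwise disjoint stationary sets $S_\beta$ (Ulam) and setting, for each $X\subseteq\omega_1$, $\eta_X$ to be the characteristic function of $\bigcup_{\beta\in X}S_\beta$, produces $2^{\aleph_1}$ pairwise non-club-equivalent $\eta$'s, and thus $2^{\aleph_1}$ pairwise non-isomorphic real closures.

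The principal obstacle is step two: exhibiting a field-theoretic feature of the pair $(R(\M_\eta^\alpha),R(\M_\eta^{\alpha+1}))$ that is sensitive to the PA-side choice and yet preserved by an arbitrary real closed field isomorphism. Many natural invariants are washed out by the real closure functor: the value group, for instance, depends only on $\SS(\M_0)$ by Theorem~\ref{VG}, and the first-order theory is already fixed. Isolating and controlling such an invariant without recourse to $\diamondsuit$ is the substantive content of the theorem and is presumably what the second author's contribution provides in place of the original $\diamondsuit$-based construction.
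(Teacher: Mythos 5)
Your skeleton matches the paper's: a continuous chain of countable elementary end extensions with a binary choice at each successor stage, a club argument showing that an isomorphism of the unions forces the two choice-sequences to agree on a club, and Ulam's family of disjoint stationary sets to produce $2^{\aleph_1}$ pairwise non-club-equivalent sequences. But you have explicitly left open the one step that carries the mathematical content, and your concrete stand-in for it does not work. You propose to fix a single distinguished cut over $R(\M_\eta^\alpha)$ and realize or omit it in $R(\M_\eta^{\alpha+1})$ according to $\eta(\alpha)$. The problem is that an isomorphism $f\colon R(\M_\eta)\to R(\M_\zeta)$, even after restricting to a club on which it respects the filtrations, only carries your chosen cut over $R(\M_\eta^\alpha)$ to \emph{some} cut over $R(\M_\zeta^\alpha)$ --- there is no reason it should be the distinguished one, since a pure real closed field has no definable structure with which to single out a cut, and "realized at stage $\alpha+1$" is in any case not a property of $R(\M_\eta)$ alone. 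You correctly sense this difficulty but offer no resolution.

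The paper's resolution is to make the binary choice be \emph{conservative versus nonconservative} elementary end extension (both always available over a countable model, Theorem~\ref{facts}(i),(ii)), and to detect it via the Scott completion rather than via a single cut. Taking the conservative stages to form a stationary set makes $\M(X)$ rather classless (Theorem~\ref{facts}(iii)), which forces every Dedekindean initial segment of $Q(\M(X))$ to be definable; hence $SC(\M(X))$ has size $\aleph_1$ and admits a filtration $\langle S_\alpha(X)\rangle$ whose $\alpha$-th piece, on a club, is exactly the set of Dedekindean cuts definable over $\M_\alpha(X)$. A nonconservative stage $\alpha$ yields a definable $A\subseteq\M_{\alpha+1}(X)$ with $A\cap\M_\alpha(X)$ undefinable, and $x=\sum_{a\in A,\,a\le b}2^{-a}$ realizes a Dedekindean cut over $Q(\M_\alpha(X))$ realized by no element of $S_\alpha(X)$; at a conservative stage, every Dedekindean cut over $Q(\M_\alpha(Y))$ realized in $Q(\M_{\alpha+1}(Y))$ is definable over $\M_\alpha(Y)$ and so is realized in $S_\alpha(Y)$ (here Lemma~\ref{realize} is needed to pull the realization of the cut of $\sigma(x)$ down into $Q(\M_{\alpha+1}(Y))$). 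Since an isomorphism of real closures extends canonically to the Scott completions, this dichotomy is isomorphism-invariant and yields the contradiction. Finally, recursive saturation is not obtained by "omitting-types bookkeeping" but by running the entire construction over $(\M_0,\Gamma)\models{\rm PA}^*$ for an inductive partial satisfaction class $\Gamma$, so that every model in every chain is recursively saturated by Lemma~\ref{satisfclass}. Without the Scott-completion invariant and the rather-classless mechanism, your outline does not close.
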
 

\begin{Remark} 
{\rm Note that all of these models have the same standard system, $\SS(\M_0)$.
If $\M$ and $\N$ are elementarily equivalent $\omega_1$-like recursively saturated models of PA with the same standard system, then $\M\equiv_{\infty,\omega_1}\N$
 (see \cite{KS} 10.2.7).  Thus we in fact have $2^{\aleph_1}$ pairwise $L_{\infty, \omega_1}$-equivalent models of PA with non-isomorphic real closures.}
 \end{Remark}
 
 \smallskip We begin by collecting some of the ingredients we will employ in the proof.
  
 \subsection*{Scott completions}

Let $F$ be an ordered field.  An initial segment $I\subseteq F$ is said to be {\em Dedekindean} if
$I+\epsilon \not \subseteq I$  for all $\epsilon>0$ in $F$. An ordered field $F$ is {\em Scott complete} if every Dedekindean initial segment has a supremum in $F$.  

\begin{Theorem}[Scott \cite{scott}]\label{dana} If $F$ is an ordered field there is a Scott complete ordered field $\hat F$ in which $F$ is dense that is unique up to isomorphism over $F$. Furthermore, if $F$ is real closed, then so is $\hat F$.
\end{Theorem}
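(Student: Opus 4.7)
The plan is to construct $\hat F$ explicitly as a set of Dedekindean cuts of $F$, verify that the resulting structure is a Scott complete ordered field with $F$ dense in it, deduce uniqueness from density and completeness, and finally handle the real closed case via an intermediate value argument.

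For the construction, let $\hat F$ be the set of Dedekindean initial segments $I \subseteq F$ such that either $I$ has a maximum in $F$ or $I$ has no supremum in $F$; this convention ensures that $a \mapsto (-\infty, a]$ is an injection $F \hookrightarrow \hat F$ with no double-counting. Order by inclusion. Define $I + J$ to be the Dedekindean initial segment with underlying set the downward closure of $\{x + y : x \in I, y \in J\}$ (normalized by our convention), and define multiplication by first fixing the positive cone $\hat F^{>0} = \{I : I \ni 0\text{ properly}\}$ and then using the standard sign reduction. Verifying the ordered field axioms is routine; the point is that the Dedekindean condition is preserved because $F$ is itself an ordered field and so addition and multiplication by positive elements are continuous in the order topology on $F$.

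Density and Scott completeness now follow by short arguments. For density, given $I \lneq J$ in $\hat F$, any $b \in J \setminus I$ admits a case analysis (on whether $b = \max J$ and whether $b$ is the supremum of $I$, recalling that $(-\infty, b)$ is excluded by our convention) producing $a \in F$ with $I \lneq (-\infty, a] \lneq J$. For Scott completeness, given a Dedekindean initial segment $\mathcal J \subseteq \hat F$, the set $I_0 = \{a \in F : (-\infty, a] \in \mathcal J\}$ is Dedekindean in $F$, and its normalization is $\sup \mathcal J$ in $\hat F$. Uniqueness is immediate: given two Scott completions $\hat F$ and $\tilde F$ of $F$, the map $x \mapsto \sup_{\tilde F}\{a \in F : a \le x\}$ is an $F$-algebra isomorphism by density on both sides.

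Finally, suppose $F$ is real closed. To show $\hat F$ is real closed I would verify two properties. First, every positive $x \in \hat F$ has a square root: the Dedekindean initial segment $\{a \in F_{\geq 0} : a^2 \le x\}$ of $F$ has a supremum in $\hat F$ by Scott completeness, and density of $F$ in $\hat F$ shows this supremum squares to $x$. Second, every odd-degree polynomial $p \in \hat F[X]$ has a root; this is the main obstacle. The plan is to prove an intermediate value theorem for $\hat F$: if $p(\alpha) < 0 < p(\beta)$ with $\alpha < \beta$ in $\hat F$, then the set $S = \{c \in F : \alpha \le c \le \beta \text{ and } p(c) \le 0\}$ is Dedekindean in $F$, and its supremum $\gamma \in \hat F$ is a zero of $p$. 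The delicate point is non-Archimedeanness: continuity of $p$ at $\gamma$ must be deduced from density of $F$ in $\hat F$ together with a Lipschitz estimate on a bounded neighbourhood of $\gamma$ using the formal derivative $p'$, which is available because $F$ is real closed and every polynomial factors into linear and irreducible quadratic factors over $F$.
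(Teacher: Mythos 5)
The paper does not actually prove this theorem --- it is quoted from Scott's 1969 paper --- so there is no internal argument to compare yours against. Your first three paragraphs follow the standard route (Dedekindean cuts, operations extended by continuity, density, completeness, uniqueness via the universal property) and are essentially correct, though ``routine'' hides a couple of points worth writing out: multiplication and inversion are only \emph{uniformly} continuous on sets bounded (respectively, bounded away from $0$) by elements of $F$, so you must check that every Dedekindean cut is bounded by field elements and that every nonzero one is bounded away from $0$ by a field element; this is exactly where the Dedekindean hypothesis earns its keep.

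The genuine gap is in the last paragraph. You locate the root as $\gamma=\sup S$ with $S=\{c\in F: \alpha\le c\le\beta,\ p(c)\le 0\}$, but (i) $S$ is not an initial segment (the sign of $p$ on $F$ can alternate), so you need something like $T=\{c: p\le 0 \mbox{ on } F\cap[\alpha,c]\}$; and, much more seriously, (ii) you assert but never show that the resulting cut is Dedekindean, i.e.\ that $\gamma$ exists in $\hat F$ at all. In a non-Archimedean field a bounded initial segment need not determine a Dedekindean cut, and this existence question is the entire content of the theorem: your Lipschitz estimate (which holds over any ordered field and does not use real closedness of $F$) only yields $p(\gamma)=0$ \emph{after} $\gamma$ is known to exist. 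Showing the cut is Dedekindean requires producing, for each $\epsilon>0$ in $F$, elements $c,c'\in F$ with $c'-c<\epsilon$ straddling the sign change; this is where real closedness of $F$ must actually enter --- e.g.\ by approximating $p$ uniformly on $[\alpha,\beta]$ by some $q\in F[X]$, using definable completeness of $F$ to locate the sign changes of $q$ at its finitely many roots in $F$, and then controlling how the relevant sign change of $p$ tracks a root of $q$ (a continuity-of-roots argument, which is only H\"older, not Lipschitz, near multiple roots). None of this is in your sketch, and the closing appeal to factoring polynomials ``over $F$'' does not apply to $p$, whose coefficients lie in $\hat F$.
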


We denote the {\em Scott completion} of an ordered field $F$ by $\hat F$.  The Scott completion of $F$ is essentially the set of 
Dedekindean initial segments of $F$.  If $\M\models$ PA, we write $SC(\M)$ for the Scott completion of $Q(\M)$. In general, the Scott completion of an ordered field need not be real closed, but $SC(\M)$ is.

\begin{Lemma}[Schmerl~\cite{schmerl}, Proposition~2.2]\label{scottcompl} If $\M\models$ {\rm PA}  then   $SC(\M)$ is real closed.
\end{Lemma}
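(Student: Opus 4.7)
The plan is to identify $SC(\M)$ with the Scott completion of the real closed field $R(\M)$ and then invoke Theorem~\ref{dana}, which says that the Scott completion of a real closed field is itself real closed.

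The key step is to show that $Q(\M)$ is dense in $R(\M)$. This uses Shepherdson's theorem as recalled in the introduction: since $\M\models\mathrm{PA}$, the nonnegative part of $\M$ is an integer part of $R(\M)$. So given $\alpha\in R(\M)$ and $\epsilon>0$ in $R(\M)$, I would choose $n\in\M$ with $n>1/\epsilon$ (hence $1/n<\epsilon$) using the integer part property, and then $m\in\M$ with $m\le n\alpha<m+1$; the rational $m/n\in Q(\M)$ then satisfies $|\alpha-m/n|<1/n<\epsilon$.

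Once density is in hand, for each $\alpha\in R(\M)$ the lower cut $I_\alpha=\{x\in Q(\M):x<\alpha\}$ is Dedekindean in the sense of Scott: given $\epsilon>0$ in $Q(\M)$, density provides $x\in Q(\M)$ with $\alpha-\epsilon<x<\alpha$, so $x\in I_\alpha$ while $x+\epsilon\notin I_\alpha$, showing $I_\alpha+\epsilon\not\subseteq I_\alpha$. The assignment $\alpha\mapsto I_\alpha$ therefore defines an order-preserving map $R(\M)\to SC(\M)$, and its compatibility with the field operations reduces, again via density, to the observation that any strict inequality $\alpha\star\beta>x$ in $R(\M)$ (for $\star\in\{+,\cdot\}$ and $x\in Q(\M)$) is witnessed by $Q(\M)$-approximations $a<\alpha$, $b<\beta$ with $a\star b\ge x$.

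Finally, since $Q(\M)\subseteq R(\M)\subseteq SC(\M)$ and $Q(\M)$ is dense in $SC(\M)$, the subfield $R(\M)$ is also dense in $SC(\M)$. Thus $SC(\M)$ is a Scott complete ordered field containing the real closed field $R(\M)$ as a dense subfield, so by the uniqueness clause of Theorem~\ref{dana} it is isomorphic to $\widehat{R(\M)}$ over $R(\M)$; the second assertion of Theorem~\ref{dana} then gives that $SC(\M)$ is real closed. The main, and essentially only, obstacle is the density of $Q(\M)$ in $R(\M)$; once this is established, the rest is formal manipulation of Scott completions.
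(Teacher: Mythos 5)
Your argument is correct. Note that the paper does not actually prove this lemma; it simply cites Schmerl's Proposition~2.2, so there is no in-text proof to compare against, but your self-contained derivation is sound and is essentially the standard route (the paper itself records the key identification $SC(\M)=\widehat{R(\M)}$ as a remark immediately after the lemma). The one genuinely necessary ingredient is exactly the one you isolate: density of $Q(\M)$ in $R(\M)$, which follows from $\M$ being an integer part of $R(\M)$. Once you have that, I would suggest skipping the explicit construction of the embedding $R(\M)\to SC(\M)$ via lower cuts altogether: since $Q(\M)$ is dense in $R(\M)$ and $R(\M)$ is dense in $\widehat{R(\M)}$, transitivity of density shows that $\widehat{R(\M)}$ is a Scott complete ordered field in which $Q(\M)$ is dense, so the uniqueness clause of Theorem~\ref{dana} gives $SC(\M)\cong\widehat{R(\M)}$ directly, and the ``furthermore'' clause finishes the proof. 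This bypasses the fiddliest part of your write-up, namely the claim that $\alpha\star\beta>x$ is always witnessed by lower $Q(\M)$-approximations: for $\star=\cdot$ that statement requires a case analysis on signs (monotonicity of multiplication in each argument fails for negative factors), and while it can be repaired, it is cleaner not to need it.
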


In particular, $SC(\M)=\widehat {R(\M)}$. We also observe that any nontrivial definable cut in $Q(\M)$ is Dedekindean.  
Indeed, suppose that $A\subset M^2$ is definable and $I=\{\frac{a}{b}:(a,b)\in A\}$ is a nontrivial  
initial segment of $Q(\M)$. Given $d>0$ we can find a element $c$ such that $\frac{c}{d}\in I$ and $\frac{c+1}{d}\not \in I$, whence $I$ is Dedekindean. 

Recall that $A\subseteq \M$ is a {\em class\/} of $\M$ if for all $b\in \M$ we have that $\{a\in A: a<b\}$ is definable. Let $A$ be a class of $\M$ and 
$$I_A=\{x\in Q(\M): x\le\sum_{a\in A, a\le b} \frac{1}{2^a}\hbox { for some } b\in M\}.$$ 
Then $I_A$ is a Dedekindean initial segment of $Q(\M)$.  
Moreover, if $I\subseteq Q(\M)\cap [0,1)$ is a Dedekindean initial segment,  then there is a class $A$ such that $I=I_A$.

 \subsection*{Facts about PA}
  
Let $\M$ and $\N$ be models of PA. We say that $\M\ee_e\N$ is {\em conservative}  if $X\cap \M$ is definable in $\M$ whenever $X\subset \N$ is definable in $\N$.   
 
If $\M\ee_e\N$ and $A$ is a definable subset of $\N$, then $A\cap \M$ is a class in $\M$, 
and if $A\cap \M$ is not definable in $\M$, then $A\cap \M$ must be unbounded.  These observations follow because the set 
$\{x\in A:x<a\}$ is coded by an element less than $2^a$ and hence coded in $\M$. 

We say that $\M\models$ PA  is {\em rather classless} if every class is definable. The following proposition summarizes three facts about models of PA that we need.

\begin{Theorem}\label{facts}  {\rm (i)} Every model of {\rm PA} has a conservative elementary end extension.

{\rm (ii)} Every countable model of {\rm PA} has a nonconservative elementary end extension.

{\rm (iii)} Suppose that  
$$\M_0\ee_e\M_1\ee_e\cdots\ee_e\M_\alpha\ee_e\cdots,\mbox{\rm\  for $\alpha<\omega_1$}$$ 
is such that each
$\M_\alpha$ is countable and $\M_\alpha=\bigcup_{\beta<\alpha}\M_\beta$ for $\alpha$ a limit ordinal.
If 
$$\{\alpha<\omega_1: \M_{\alpha+1}\mbox{\rm\ is a conservative extension of\ }\M_\alpha\}$$ 
is stationary then $\bigcup_{\alpha<\omega_1}\M_\alpha$ is rather classless.
\end{Theorem}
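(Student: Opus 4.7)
My plan is to treat the three parts separately, leaning on classical ingredients for the first two and reserving the bulk of the argument for (iii).

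For (i), I would invoke the MacDowell--Specker construction: build an $\M$-definable complete unbounded $1$-type $p(x)$ over $\M$ by iteratively deciding formulas $\phi(x,\bar y)$ in an $\M$-definable fashion. Realizing $p$ in some elementary extension yields a proper elementary end extension, and the $\M$-definability of $p$ forces conservativity, because the $M$-trace of any set defined in the extension by $\phi(x,\bar c)$ coincides with the evaluation at $\bar c$ of the $\M$-definable set $\{\bar a : \phi(x,\bar a) \in p\}$.

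For (ii), I would use that any countable nonstandard $\M \models {\rm PA}$ has continuum many classes (for instance, via perfect-tree constructions of coherent bounded codes) but only countably many definable subsets, so some class $A \subseteq M$ is not definable. Letting $c_\beta \in M$ denote the canonical code of $A \cap [0,\beta]$, the partial type over $M$ asserting that $x$ is a code whose restriction to $[0,\beta]$ agrees with $c_\beta$ for every $\beta \in M$, and that $x$ exceeds every element of $M$, is consistent. Completing it in MacDowell--Specker style to an elementary end-extending type and realizing it in some $\N \ee_e \M$ yields a single element $c \in N$ whose bit-string on $M$ is exactly $A$, so $\N$ has a definable set whose $M$-trace is the non-definable $A$; that is, $\N$ is not conservative over $\M$.

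The real work lies in (iii). Suppose $A$ is a class in $\M = \bigcup_{\alpha < \omega_1} \M_\alpha$ and $S = \{\alpha : \M_{\alpha+1} \text{ is conservative over } \M_\alpha\}$ is stationary. For each $\alpha \in S$ I pick any $\beta \in M_{\alpha+1} \setminus M_\alpha$, so $\beta > M_\alpha$ since $M_\alpha$ is an initial segment of $M_{\alpha+1}$. Because $A$ is a class in $\M$, the bounded piece $A \cap [0,\beta]$ is definable in $\M$ and is coded by some $c \in M$ with $c < 2^{\beta+1}$. As $\M_{\alpha+1} \ee_e \M$ is closed under $x \mapsto 2^x$, we get $c \in M_{\alpha+1}$. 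Since $M_\alpha \subseteq [0,\beta]$, the set defined in $\M_{\alpha+1}$ by ``bit $a$ of $c$ equals $1$'' has $M_\alpha$-trace exactly $A \cap M_\alpha$; conservativity then yields a formula $\phi_\alpha(x,\bar p_\alpha)$ with $\bar p_\alpha \in M_\alpha$ defining $A \cap M_\alpha$ in $\M_\alpha$. A Fodor pressing-down argument finishes the job: pigeonhole on countably many formulas fixes $\phi_\alpha = \phi$ on a stationary $S_1 \subseteq S$; restricting to the stationary set of limit ordinals in $S_1$, the regressive map $\alpha \mapsto \min\{\gamma < \alpha : \bar p_\alpha \in M_\gamma\}$ is constant $\gamma_0$ on a further stationary $S_2$; a countability pigeonhole on $M_{\gamma_0}$ then yields a stationary $S_3$ on which $\bar p_\alpha = \bar p$. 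Elementarity combined with cofinality of $S_3$ in $\omega_1$ forces $A = \phi(\M,\bar p)$, proving $A$ is definable. I expect the main obstacle to be executing the coding step cleanly: one must invoke the standard PA fact that bounded definable sets are coded below $2^{\beta+1}$ and verify that this code actually lands inside $M_{\alpha+1}$; once that is in place, the remainder reduces to familiar set-theoretic combinatorics.
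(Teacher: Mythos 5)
The paper itself offers no proof of this theorem: it simply cites Theorems~2.2.8, 2.1.7 and 2.2.14 of \cite{KS}, so your proposal has to be judged on its own terms rather than against an argument in the text. Your proof of (iii) is complete and correct, and it is the standard one: since $\beta>M_\alpha$ and $M_{\alpha+1}$ is an initial segment of $M$ closed under $x\mapsto 2^x$, the code $c<2^{\beta+1}$ of $A\cap[0,\beta]$ lies in $M_{\alpha+1}$, conservativity pulls $A\cap M_\alpha$ down to a definable set of $\M_\alpha$, and the Fodor-plus-pigeonhole stabilization of $(\phi_\alpha,\bar p_\alpha)$ on a stationary (hence cofinal) set finishes the argument. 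Your sketch of (i) is the usual MacDowell--Specker/Gaifman construction of a complete definable unbounded type; the one point you leave implicit --- that definability of the complete type by itself forces the generated extension to be an end extension --- is true (apply the least-number principle in $\M$ to the definable, upward-closed set $\{e:\mbox{``}f(x)\le e\mbox{''}\in p\}$), so this part is acceptable as a sketch, though your description of why traces are definable conflates parameters from $N$ with parameters from $M$ and should be restated as: every set definable in $M(c)$ has the form $\phi(y,c,\bar a)$ with $\bar a\in M$, and its trace is $\{b\in M:\phi(b,x,\bar a)\in p\}$.

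The genuine gap is in (ii), at the phrase ``completing it in MacDowell--Specker style to an elementary end-extending type.'' That step is not automatic: a consistent unbounded type over a countable nonstandard model of PA need not be realizable in \emph{any} elementary end extension. For instance, fix nonstandard $b\in M$ and let $f(x)$ be the exponent of $2$ in $x$; the type $\{x>a:a\in M\}\cup\{f(x)<b\}\cup\{f(x)\ne a:a\in M\}$ is finitely satisfiable in $\M$, yet any realization in $\N\ee_e\M$ would make $f$ of that realization a new element of $N$ below $b$, contradicting end-ness. So you must actually prove that your particular coding type $\{x>a:a\in M\}\cup\{\mbox{``$x$ agrees with $c_\beta$ below $\beta$''}:\beta\in M\}$ does extend to an end-extension type; this is precisely the nontrivial content of the result in \cite{KS} that every class of a countable model is coded in some elementary end extension. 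Concretely, the refinement step of the MacDowell--Specker construction must preserve, at every stage, the unboundedness of $Y\cap X_\beta$ for every $\beta$ (where $X_\beta$ is the set of $x$ agreeing with $c_\beta$ below $\beta$); an arbitrary unbounded definable refinement can destroy this (take $Y'$ to be the multiples of $2^{\beta+1}$), and it is the fact that $A$ is a class --- so that $\{X_{\beta'}:\beta'\le\beta\}$ is uniformly definable from the single parameter $c_\beta$ --- that makes the bookkeeping go through. Your overall strategy for (ii) (code an undefinable class) is the right one; what is missing is the proof of this coding lemma, which is where all the work lies.
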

 
These are, respectively, Theorems~2.2.8,~2.1.7, and~2.2.14 in \cite{KS}.  We shall find it useful to have these results available in a slightly more general setting.  Let $\L^*$ be the language obtained by adjoining a new unary predicate symbol to the language of arithmetic, and let PA$^*$ be the extension of PA in which induction axioms for all $\L^*$ formulas are added.  Each of the assertions in Theorem \ref{facts} holds as well for PA$^*$, where throughout ``definable" is taken to mean ``definable in $\L^*"$; in fact, the context in which these statement appear in \cite{KS} includes this setting.

\medskip We also need the next fact about realizing types in real closures of end extensions.

\begin{Lemma}\label{realize} Suppose that $\M\ee_e\M^\prime\ee_e \M^{\prime\prime}$ are models of\/ {\rm PA}.  Then every 
non-principal type $q$ over $R(\M)$ that is realized in $R(\M^{\prime\prime})$ is already realized in $Q(\M^\prime)$. 
\end{Lemma}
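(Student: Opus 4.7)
My plan is to construct $y\in Q(\M')$ explicitly, using integer-part approximations in $\M''$ and pulling the numerator back into $\M'$ via the end extension. Throughout I will fix a positive $b\in\M'\setminus\M$. Two properties of such a $b$ will drive the verification: first, for any $z\in R(\M'')$ whose absolute value is bounded by an element of $\M$, we have $\lfloor bz\rfloor\in\M'$, since $|bz|\le bm\in\M'$ forces $|\lfloor bz\rfloor|\le bm+1\in\M'$ and then $\M'\ee_e\M''$ gives $\lfloor bz\rfloor\in\M'$; and second, $1/b$ lies below every positive element of $R(\M)$, because any positive $r\in R(\M)$ satisfies $r>1/k$ for some $k\in\M$ (take $k$ just larger than the integer part of $1/r$) and $b>k$ by end extension.

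The argument splits according to how $x$ sits relative to $R(\M)$. If $x>R(\M)$ (respectively $x<R(\M)$) I take $y$ to be any positive (negative) element of $\M'\setminus\M$; every element of $R(\M)$ has an integer part in $\M$, so such $y$ dominates (is dominated by) all of $R(\M)$. Otherwise $x$ is bounded, and I subdivide according to the cut $L=\{r\in R(\M):r<x\}$ and $U=\{r\in R(\M):r>x\}$: either (a) $L$ has no maximum and $U$ has no minimum in $R(\M)$, or (b) $L$ has a maximum $r_0\in R(\M)$, or (c) $U$ has a minimum $r_1\in R(\M)$. These subcases are exhaustive and mutually exclusive by a triangle inequality: if both $r_0=\max L$ and $r_1=\min U$ existed, then $r_1-r_0=(r_1-x)+(x-r_0)$ would be a positive element of $R(\M)$ but a sum of two quantities each smaller than every positive element of $R(\M)$, a contradiction.

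In subcase (a) I take $y=\lfloor bx\rfloor/b$. For $r\in U$, $r>x\ge y$. For $r\in L$, the absence of a maximum yields $r'\in L$ with $r<r'$, so $x-r>r'-r\ge 1/k$ for some $k\in\M$; hence $x-r>1/b$, giving $r<x-1/b\le y$. In subcase (b), the difference $x-r_0$ must be infinitesimal relative to $R(\M)$ (otherwise $r_0+s/2\in L$ for some positive $s\in R(\M)$ with $x-r_0>s$, contradicting $r_0=\max L$), and I take $y=(\lfloor br_0\rfloor+1)/b\in Q(\M')$. A direct computation places $y$ in the interval $(r_0,r_0+1/b]$, which sits above every $r\le r_0$ and below $r_0+1/k$ for every $k\in\M$, hence below every $r>r_0$ in $R(\M)$; this matches the cut of $x$. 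Subcase (c) is handled symmetrically by $y=(\lfloor br_1\rfloor-1)/b$.

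The step I expect to require the most care is noticing that in subcases (b) and (c) one must not implicitly rely on an $\M'$-sized bound for $1/(x-r_0)$ or $1/(r_1-x)$: when $x\in R(\M'')\setminus R(\M')$ the infinitesimal $x-r_0$ can be so small that $1/(x-r_0)$ exceeds everything in $\M'$. The design of the construction sidesteps this by building $y$ from $r_0$ or $r_1$, each an element of $R(\M)$, rather than from $x$; so only the existence of $r_0$ or $r_1$ in $R(\M)$ is used, and this is an intrinsic feature of the cut determined by $q$.
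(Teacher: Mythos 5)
Your proof is correct and follows essentially the same route as the paper's: both approximate the realizing element by a fraction with denominator $b\in\M'\setminus\M$ and use the end extension $\M'\ee_e\M''$ to pull the bounded numerator back into $\M'$. The only real difference is that you work out explicitly the endpoint cases (b) and (c), which the paper's proof dismisses at the outset as easy, and you rightly build $y$ there from $r_0$ or $r_1$ rather than from $x$.
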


\begin{proof}  If the type $q$ is the type at $\pm\infty$ or of the form
$\{a<v<b: a\in R(\M), a<b\}$ or $\{b<v<a: a\in R(\M), b<a\}$ for some $b\in \M$, then it is easy to see that $q$ is realized 
in $Q(\M^\prime)$. Thus we may assume that $q$ is a cut with no least upper bound or greatest lower bound in $R(\M)$. 

Suppose that $a\in R(\M^{\prime\prime})$ realizes $q$ and 
let $N>\M$. We claim
that $a+\frac{1}{N}$ realizes $q$ as well.  
To this end, let $b\in R(\M)$ be such that $a<b$ and choose $c\in R(\M)$ such that $a<c<b$.
Since $1/N<b-c$, it follows that $a+\frac{1}{N}<c+\frac{1}{N}<b$.  As this is true for every $b\in R(\M)$ 
with $a<b$ we conclude that $a+\frac{1}{N}$ realizes $q$.

Since $\M^{\prime\prime}$ is an integer part of $R(\M^{\prime\prime})$, we have that $Q(\M^{\prime\prime})$ is dense in 
$R(\M^{\prime\prime})$ and thus $q$ is realized in $Q(\M^{\prime\prime})$. So we may assume that $a\in Q(\M^{\prime\prime}).$ 
 
Now let $m\in M$ be such that $a<m$ 
and $d\in \M^\prime\setminus\M$.  In $\M^{\prime\prime}$ we can find $c<dm$ such 
that $\frac{c}{d}<a\le \frac{c+1}{d}$. 
As $\M^\prime\ee_e\M^{\prime\prime}$, we have that $c\in \M^\prime$.
Arguing as above, we see that $\frac{c}{d}$ (as well as $\frac{c+1}{d}$)  realizes $q$, hence $q$ is realized in 
$Q(\M^\prime)$.
\end{proof}

 \subsection*{The basic construction}
 
Fix $\M_0$ a countable model of PA and let $X\subseteq\omega_1$ be stationary.  We build an $\omega_1$-chain of countable models  $$\M_0(X)\ee_e  \M_1(X)\ee_e\cdots\ee_e \M_\alpha(X)\ee_e\cdots\mbox{\ for $\alpha<\omega_1$}$$ such that:
 \begin{itemize}
 \item[(i)] $\M_0(X)=\M_0$;
\item[(ii)]  if $\alpha$ is a limit, then $\M_\alpha(X)=\bigcup_{\beta<\alpha}\M_\beta(X)$;
\item[(iii)] $\M_{\alpha+1}(X)$ is a conservative extension of $\M_\alpha(X)$ if and only if $\alpha\in X$.
\end{itemize}

Note that we can construct such a chain satisfying~(iii) by~\ref{facts}~(i) and~(ii). Let 
$\M(X)=\bigcup_{\alpha<\omega_1}\M_\alpha(X)$.  By Theorem~\ref{facts}~(iii), we have that $\M(X)$ is rather classless. By the remarks following Lemma~\ref{scottcompl},  an initial segment of $Q(\M(X))$ is Dedekindean if and only if it is definable.  In particular $|SC(\M(X))|=\aleph_1$.  We choose a filtration
$\langle S_\alpha(X):\alpha<\omega_1\rangle$ of $SC(\M(X))$ such that each $S_\alpha(X)$ is countable,
 $$S_0(X)\subseteq S_1(X)\subseteq\cdots\subseteq S_\alpha(X)\subseteq \cdots \mbox{\ for $\alpha<\omega_1$,}$$ 
for each limit ordinal $\alpha<\omega_1$ we have $S_\alpha(X)=\bigcup_{\beta<\alpha} S_\beta(X)$, and also 
 $SC(\M(X))=\bigcup_{\alpha<\omega_1}S_\alpha(X)$. 
 
 \begin{Lemma}  Let $X$ and $Y$ be stationary subsets of $\omega_1$ that disagree on closed unbounded sets, that is, $X  \triangle Y$ is stationary. Then $R(\M(X))$ and $R(\M(Y))$ are non-isomorphic.
 \end{Lemma}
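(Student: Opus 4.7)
My plan is to suppose toward contradiction that $\sigma\colon R(\M(X))\to R(\M(Y))$ is an isomorphism, and to extract a club along which the conservative/non-conservative pattern of the two chains must match, contradicting the stationarity of $X\triangle Y$. First I would extend $\sigma$ to an isomorphism $\hat\sigma\colon SC(\M(X))\to SC(\M(Y))$ of the Scott completions, unique by Theorem~\ref{dana}. A routine closure-point argument applied to the continuous countable filtrations $\M_\alpha(X),\M_\alpha(Y),S_\alpha(X),S_\alpha(Y)$ together with $\sigma$ and its inverse produces a club $C_0 \subseteq \omega_1$ on which, for every $\alpha \in C_0$, $\sigma$ restricts to isomorphisms $R(\M_\alpha(X))\cong R(\M_\alpha(Y))$ and $Q(\M_\alpha(X))\cong Q(\M_\alpha(Y))$, and $\hat\sigma$ sends $S_\alpha(X)$ onto $S_\alpha(Y)$.

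Next I would refine $C_0$ to a club $C$ on which additionally $\sigma(\M_\alpha(X)) = \M_\alpha(Y)$. Each pair $\M_\alpha(X),\M_\alpha(Y)$ is a countable recursively saturated model of PA with the same standard system $\SS(\M_0)$, hence isomorphic by \cite{DKS}, and their countable real closures come with integer parts conjugated by an automorphism. The rather classlessness of $\M(X)$ and $\M(Y)$ is essential here: through the bijection $A\mapsto I_A$ between classes and Dedekindean cuts, it forces the Scott-completion data recorded on $S_\alpha$ to pin down exactly the classes, and therefore the integer-part structure, of each $\M_\alpha$. A back-and-forth along the countable levels then threads the integer parts so as to satisfy $\sigma(\M_\alpha(X)) = \M_\alpha(Y)$ along a club $C \subseteq C_0$.

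With the alignment in hand, pick $\alpha \in C \cap (X \triangle Y)$ with $\alpha+1\in C$; say $\alpha\in X\setminus Y$. By construction $\M_{\alpha+1}(X)$ is a conservative extension of $\M_\alpha(X)$, while $\M_{\alpha+1}(Y)$ is not, so there is $D\subseteq \M_{\alpha+1}(Y)$ definable in $\M_{\alpha+1}(Y)$ with $D\cap \M_\alpha(Y)$ not definable in $\M_\alpha(Y)$. Then $\sigma^{-1}(D)$ is definable in $\M_{\alpha+1}(X)$, and $\sigma^{-1}(D)\cap\M_\alpha(X) = \sigma^{-1}(D\cap\M_\alpha(Y))$ is a non-definable class of $\M_\alpha(X)$, contradicting the conservativity of $\M_{\alpha+1}(X)$ over $\M_\alpha(X)$. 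The main obstacle is the alignment step: integer parts are not canonically determined by real closures (witness their shifts by infinitesimal elements), so threading $\sigma(\M_\alpha(X)) = \M_\alpha(Y)$ along a club relies crucially on the rather classlessness of $\M(X)$ and $\M(Y)$ together with \cite{DKS} and the homogeneity of the countable recursively saturated real closed fields involved.
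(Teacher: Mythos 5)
Your final step is fine \emph{granted} the alignment $\sigma(\M_\alpha(X))=\M_\alpha(Y)$ on a club, but that alignment step is a genuine gap, and it is precisely the difficulty the paper's argument is designed to avoid. The map $\sigma$ is a \emph{given} field isomorphism; a real closed field has many integer parts modelling PA, and nothing forces $\sigma$ to carry the distinguished integer part $\M_\alpha(X)$ onto $\M_\alpha(Y)$, even on a club. Your proposed tools cannot close this: the D'Aquino--Knight--Starchenko theorem and back-and-forth homogeneity let you build, at each countable level $\alpha$, \emph{some} isomorphism $R(\M_\alpha(X))\to R(\M_\alpha(Y))$ respecting integer parts, but they do not let you modify the fixed global $\sigma$ (level-by-level corrections by automorphisms will not cohere across levels, and composing with one destroys the property elsewhere). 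Rather classlessness says classes of $\M(X)$ are definable; it says nothing about uniqueness of integer parts of $R(\M(X))$. (Two smaller problems: the lemma concerns the basic construction, where the $\M_\alpha$ need not be recursively saturated, so the appeal to \cite{DKS}-plus-homogeneity is unavailable at that level; and a club need not contain $\alpha+1$ whenever it contains $\alpha$, so ``pick $\alpha\in C\cap(X\triangle Y)$ with $\alpha+1\in C$'' is not generally possible.)

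The paper sidesteps integer parts entirely by using an invariant visible in the field structure alone: the Scott completion $SC(\M(Z))$ is canonical, so $\sigma$ extends to it, and on a club $\hat\sigma$ maps $S_\alpha(X)$ onto $S_\alpha(Y)$, where (by rather classlessness and the correspondence $A\mapsto I_A$ between classes and Dedekindean cuts) $S_\alpha(Z)$ is exactly the set of cuts \emph{definable over} $\M_\alpha(Z)$. Non-conservativity on the $X$ side at $\alpha$ yields an element $x=\sum_{a\in A,\,a\le b}2^{-a}\in Q(\M_{\alpha+1}(X))$ whose cut over $R(\M_\alpha(X))$ is not definable over $\M_\alpha(X)$, hence not realized in $S_\alpha(X)$; on the $Y$ side, Lemma~\ref{realize} (which you do not use, but which is essential here) pushes the cut of $\sigma(x)$ down into $Q(\M_{\alpha+1}(Y))$, where conservativity makes it definable over $\M_\alpha(Y)$ and hence realized in $S_\alpha(Y)$. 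That asymmetry contradicts $\hat\sigma(S_\alpha(X))=S_\alpha(Y)$ with no need to know where $\sigma$ sends the integer parts. You should rework your argument along these lines, replacing the alignment step by the Scott-completion invariant.
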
  
 \begin{proof}  
For a contradiction suppose that  $\sigma:R(\M(X))\rightarrow R(\M(Y))$ is an isomorphism. Note by Theorem~\ref{dana} that 
$\sigma$ extends to an isomorphism of the respective Scott completions, which we also denote as $\sigma$. Each of the sets below is closed and unbounded: 
\begin{itemize}
\item[(a)] $\{\alpha:  \sigma\restriction R(\M_\alpha(X))\mbox{ is an isomorphism from } R(\M_\alpha(X)) \mbox{ to } 
R(\M_\alpha(Y))\}$;
\item[(b)]  $\{\alpha: S_\alpha(X)$ is real closed and $\sigma\restriction S_\alpha(X)$ is an isomorphism onto $S_\alpha(Y)\}$;
\item[(c)]  $\{\alpha: S_\alpha(Z)$ is the set of all Dedekindean initial segments of $Q(\M(Z))$ definable over $\M_\alpha(Z)\}$ where $Z=X$ or $Y$;
\item[(d)]  $\{\alpha: S_\alpha(Z)\cap R(\M(Z))=R(\M_\alpha(Z))\}$, where $Z=X$ or $Y$.
\end{itemize}
Without loss of generality, assume that $Y\setminus X$ is stationary.  We can find an ordinal $\alpha\in Y\setminus X$ such
that $\alpha$ lies in the intersection of the six closed unbounded sets above.

\smallskip  Since $\M_{\alpha+1}(X)$ is a non-conservative extension of $\M_\alpha(X)$, there is a definable
 $A\subset \M_{\alpha+1}(X)$  such that $A\cap \M_{\alpha}(X)$ is not definable in $\M_\alpha(X)$.  Fix $b\in \M_{\alpha+1}(X)\setminus \M_\alpha(X)$. Then 
 $x=\sum_{a\in A, a\le b}\frac{1}{2^a}\in Q(\M_{\alpha+1}(X))$ realizes a Dedekindean cut over $Q(\M_{\alpha}(X))$ that is not definable in $\M_{\alpha}(X)$, and hence  by~(c) and~(d) above no element of  $S_\alpha(X)$ realizes the type of $x$ over 
 $R(M_\alpha(X))$.

The type of $\sigma(x)$ over $R(M_\alpha(Y))$ is Dedekindean and
by Lemma~\ref{realize} is realized in $Q(\M_{\alpha+1}(Y))$.
Since $\M_{\alpha+1}(Y)$ is conservative over $\M_{\alpha}(Y)$, the cut determined by $\sigma(x)$ is definable in 
$\M_{\alpha}(Y)$. Let this cut be given by 
$I=\{\frac{b}{c}: b, c \in \M_{\alpha}(Y) \mbox{\ and\ } \M_{\alpha}(Y)\models \phi(b,c,\bar a)\}$. 
As $I$ is definable, $I\in SC(\M(Y))$, and so, as $I$ is definable
over $\M_{\alpha}(Y)$, by~(c), we have that $I\in S_\alpha(Y)$.
 
We thus have shown that no element of $S_\alpha(X)$ realizes the cut of $x$ over $R(\M_\alpha(X))$,
but that there is an element of $S_\alpha(Y)$ realizing the cut of $\sigma(x)$ over $R(\M_\alpha(Y))$. This contradicts the fact by~(a) and~(b) above  
that $\sigma$ is an isomorphism between $S_\alpha(X))$ and $S_\alpha(Y)$ that restricts to an isomorphism between 
$R(\M_\alpha(X))$ and $R(\M_\alpha(Y))$.
 \end{proof}
 
To obtain the maximum number of  non-isomorphic real closures as in the conclusion of Theorem~\ref{main} we empoly a standard combinatorial lemma. For its proof, see for example, \cite{marker}~5.3.10.
 
 \begin{Lemma} There exists a family $(X_\alpha: \alpha<\omega_1)$ of pairwise disjoint stationary subsets of 
 $\omega_1$.  In addition, if for all $A\subseteq\omega_1$ we put $X_A=\bigcup_{\alpha\in A}X_\alpha$, then 
 $\{X_A:A\subset \omega_1\}$ is a family of $2^{\aleph_1}$ stationary subsets of $\omega_1$ such that $X_A\triangle X_B$ is stationary for all $A\ne B$.
 \end{Lemma}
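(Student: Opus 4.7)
The plan is to reduce the statement to the classical theorem of Solovay asserting that every stationary subset of $\omega_1$ admits a partition into $\omega_1$ pairwise disjoint stationary subsets, and then derive the second conclusion by an elementary Boolean computation.

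For the Boolean half, suppose $(X_\alpha : \alpha < \omega_1)$ is any pairwise disjoint family of nonempty stationary subsets of $\omega_1$. For each $A \subseteq \omega_1$, set $X_A = \bigcup_{\alpha \in A} X_\alpha$. Each nonempty $X_A$ contains the stationary set $X_\alpha$ for any $\alpha \in A$, so is itself stationary. The map $A \mapsto X_A$ is injective, because disjointness of the $X_\alpha$ forces $X_\alpha \subseteq X_A$ if and only if $\alpha \in A$; this yields $2^{\aleph_1}$ distinct sets. For $A \neq B$, choosing $\alpha \in A \triangle B$ gives $X_\alpha \subseteq X_A \triangle X_B$, so the symmetric difference is stationary.

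The substantive content is Solovay's theorem. To prove it, let $S \subseteq \omega_1$ be stationary; without loss of generality $S$ consists of limit ordinals. For each $\alpha \in S$ fix a strictly increasing $\omega$-sequence $\langle f_\alpha(n) : n < \omega \rangle$ cofinal in $\alpha$, and define regressive functions $g_n : S \to \omega_1$ by $g_n(\alpha) = f_\alpha(n)$. The crucial claim is that some $g_n$ takes arbitrarily large values on stationary sets: there exists $n$ such that for every $\gamma < \omega_1$ the set $\{\alpha \in S : g_n(\alpha) \geq \gamma\}$ is stationary. Otherwise, for each $n$ one could pick $\gamma_n$ with $\{\alpha \in S : g_n(\alpha) \geq \gamma_n\}$ nonstationary, and setting $\gamma = \sup_n \gamma_n < \omega_1$, the fact that a countable union of nonstationary sets is nonstationary would give that $\{\alpha \in S : \sup_n g_n(\alpha) \geq \gamma\}$ is nonstationary. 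But $\sup_n g_n(\alpha) = \alpha$ for each limit $\alpha \in S$, while $\{\alpha \in S : \alpha \geq \gamma\}$ is stationary because $S$ is and $\gamma < \omega_1$, a contradiction.

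Fix such an $n$. For each $\gamma < \omega_1$, applying Fodor's lemma to the regressive function $g_n$ on the stationary set $\{\alpha \in S : g_n(\alpha) \geq \gamma\}$ produces some $\delta \geq \gamma$ for which $T_\delta := S \cap g_n^{-1}(\delta)$ is stationary. Letting $\gamma$ range over $\omega_1$ yields uncountably many such $\delta$, and the sets $T_\delta$ are automatically pairwise disjoint as fibers of a single function; any $\omega_1$-enumeration of the stationary ones supplies the required partition. The main obstacle is establishing the existence of the unboundedly regressive $g_n$ above; the rest of the argument is a routine application of Fodor's lemma together with the elementary Boolean manipulation already carried out.
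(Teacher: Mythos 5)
Your proof is correct and follows essentially the same route as the paper, which gives no argument of its own but cites the standard reference (Marker 5.3.10, i.e.\ Solovay's splitting theorem): cofinal $\omega$-sequences, the claim that some regressive $g_n$ is unboundedly large on stationary sets, Fodor's lemma, and the routine Boolean computation. Two cosmetic points only: the contradiction step reads more cleanly if you note that for $\alpha>\gamma$ cofinality of $f_\alpha$ in $\alpha$ yields some $n$ with $g_n(\alpha)>\gamma\ge\gamma_n$, placing $\alpha$ in the nonstationary union (rather than appealing to $\sup_n g_n(\alpha)=\alpha$), and the stationary fibers $T_\delta$ form a pairwise disjoint family rather than a partition of $S$, which is all the lemma actually requires.
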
   
 
 \begin{Corollary} Let $\M$ be a countable model of {\rm PA}.  There is a family $(\M_\alpha:\alpha<2^{\aleph_1})$ of $\omega_1$-like elementary end extensions of $\M$ such that
 $R(\M_\alpha)\not\iso R(\M_\beta)$ for $\alpha\ne \beta$.  In particular, all of these models have the same standard system.
 \end{Corollary}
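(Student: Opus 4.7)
The plan is to combine the two preceding lemmas mechanically. First I would invoke the combinatorial lemma to obtain a family $(X_A:A\subseteq\omega_1)$ of $2^{\aleph_1}$ stationary subsets of $\omega_1$ whose pairwise symmetric differences are stationary. Then, taking the given countable model $\M$ in the role of $\M_0$, for each $A\subseteq\omega_1$ I would run the basic construction on $X_A$ to produce a countable chain $\langle\M_\alpha(X_A):\alpha<\omega_1\rangle$ of elementary end extensions whose union $\M(X_A)$ is an $\omega_1$-like elementary end extension of $\M$. Re-indexing these $2^{\aleph_1}$ models as $(\M_\alpha:\alpha<2^{\aleph_1})$ gives the required family.

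For pairwise non-isomorphism of the real closures I would simply apply the lemma immediately preceding the combinatorial one: when $A\neq B$ the symmetric difference $X_A\triangle X_B$ is stationary, and hence $R(\M(X_A))\not\iso R(\M(X_B))$. For the common standard system, I would note that every member of $\SS(\M)$ is already coded by the residues of any fixed nonstandard element of $\M$ modulo the standard primes; since each $\M(X_A)$ is an elementary end extension of $\M$, those residues are unchanged, so $\SS(\M(X_A))=\SS(\M)$ for every $A$.

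There is essentially no new obstacle here: the substantive work — the back-and-forth argument detecting non-conservative steps through a Dedekindean cut realized on one side but not the other, the use of stationarily many conservative steps to force rather classlessness, and the combinatorial splitting of $\omega_1$ — has all been carried out in the preceding lemmas. The only mild point to verify is that the basic construction takes as input an arbitrary countable model of PA and does not require the recursive-saturation hypothesis of Theorem~\ref{main}, so it applies to the $\M$ of the Corollary directly.
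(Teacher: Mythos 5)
Your proposal is correct and is essentially the paper's own (implicit) argument: the Corollary is stated there without a separate proof precisely because it follows mechanically from the basic construction, the non-isomorphism lemma for stationary symmetric differences, and the combinatorial splitting lemma, and the basic construction indeed needs only a countable model of PA as input. The one small imprecision is that your standard-system remark only establishes $\SS(\M)\subseteq\SS(\M(X_A))$; the reverse inclusion also holds (and is what the paper appeals to) because in an elementary end extension any set coded by a new element is already coded below any fixed nonstandard element of $\M$, hence coded in $\M$.
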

 
 \subsection*{Recursive saturation}
 
To finish the proof of Theorem~\ref{main} we need to modify the basic construction above to build recursively saturated models. This is accomplished by a standard trick. 
 
Let $\M$ be a model of PA. We say that $\Gamma\subset \M$ is a {\em partial satisfaction class for $\M$\/} if 
$$(\M,\Gamma)\models\forall \bar v\left[\phi(\bar v) \leftrightarrow \< \lc \phi\rc,\bar v\> \in \Gamma\right]$$
for all formulas $\phi(\bar v)$, where $\< \cdot,\cdot \> $ is a standard pairing function and 
$\lc\phi\rc$ is a fixed G\"odel coding of formulas.  We say that $\Gamma\subset \M$ is {\em inductive} if $(\M,\Gamma)$ satisfies the induction axiom for formulas in the language in which we adjoin a unary predicate for $\Gamma$, i.e., $(\M,\Gamma)\models$ PA$^*$.

The following lemma is Proposition~1.9.4 of~\cite{KS}.

\begin{Lemma}\label{satisfclass} {\rm (i)} If $\Gamma\subseteq \M$ is an inductive partial satisfaction class, then $\M$ is recursively saturated.

{\rm (ii)} If $\M$ is resplendent, then $\M$ has an inductive partial satisfaction class.
\end{Lemma}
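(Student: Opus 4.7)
\smallskip
\textbf{Plan.} I will handle (i) by an overspill argument internal to $(\M,\Gamma)$, and (ii) by verifying that the appropriate theory is consistent with $\Th(\M)$ and then invoking resplendence.

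For part (i), suppose $(\M,\Gamma)\models {\rm PA}^*$ with $\Gamma$ a partial satisfaction class, and let $p(v,\bar w)=\{\phi_n(v,\bar w):n<\omega\}$ be a recursive set of formulas such that $p(v,\bar a)$ is consistent with $\Th(\M,\bar a)$ for some $\bar a\in M$. Because the set of G\"odel numbers $\{\lc\phi_n(v,\bar w)\rc:n<\omega\}$ is recursive, it is $\Sigma_1$-definable in $\Nn$, hence representable in ${\rm PA}$; composing with the primitive recursive substitution function that plugs in the parameters $\bar a$ yields an $L$-definable function $n\mapsto c_n\in\M$ such that, for each standard $n$, $c_n=\lc\phi_n(v,\bar a)\rc$. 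Inside the expansion $(\M,\Gamma)$ consider the $L^*$-definable set
$$A=\bigl\{n\in M:\exists v\,\forall i\le n\,\<c_i,v\>\in\Gamma\bigr\}.$$
For each standard $n$, the schema defining $\Gamma$ applied to $\bigwedge_{i\le n}\phi_i(v,\bar a)$, combined with the finite satisfiability of $p(v,\bar a)$ in $\M$, shows $n\in A$. Since $(\M,\Gamma)\models {\rm PA}^*$, overspill produces a nonstandard $N\in A$; any witness $v$ for $N$ satisfies $\phi_n(v,\bar a)$ for every standard $n$ (by the Tarski condition on $\Gamma$ for each such standard formula), so $v$ realizes $p(v,\bar a)$ in $\M$.

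For part (ii), let $L^*=L\cup\{\Gamma\}$ and let $T$ be the $L^*$-theory consisting of
$$\forall\bar v\,\bigl[\phi(\bar v)\leftrightarrow\<\lc\phi\rc,\bar v\>\in\Gamma\bigr]\qquad\text{for each $L$-formula $\phi$,}$$
together with all instances of the induction schema in $L^*$. The theory $T$ is recursive and has no free parameters, so by the definition of resplendence it suffices to verify that $T\cup\Th(\M)$ is consistent. I will do this via compactness. A finite $T_0\subseteq T$ mentions only finitely many formulas $\phi_1,\ldots,\phi_k$ from the Tarski schema and finitely many induction instances $\psi_1,\ldots,\psi_m$. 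In $\M$ interpret $\Gamma$ as
$$\Gamma_0=\bigcup_{i=1}^{k}\bigl\{\<\lc\phi_i\rc,\bar v\>:\M\models\phi_i(\bar v)\bigr\},$$
which is $L$-definable in $\M$. Because $\Gamma_0$ is $L$-definable, every $L^*$-formula $\psi_j$ translates inside $(\M,\Gamma_0)$ to an $L$-formula $\psi_j'$ by replacing each occurrence of $\Gamma_0(t)$ by its $L$-definition; the induction instance for $\psi_j$ in $(\M,\Gamma_0)$ is then equivalent to the ${\rm PA}$ induction instance for $\psi_j'$, which holds in $\M$. Thus $(\M,\Gamma_0)\models T_0$, which gives the required consistency, and resplendence produces an inductive partial satisfaction class $\Gamma\subseteq M$.

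The main obstacle is not conceptual but bookkeeping. In (i) one must check that the substitution of actual parameters $\bar a\in M$ into the standard G\"odel codes of $\phi_n$ can be executed by an $L$-definable function of $n$; this uses the standard fact that substitution is a primitive recursive operation on codes and that recursive functions are $\Sigma_1$-represented in ${\rm PA}$. In (ii) one must verify that replacing $\Gamma$ by its $L$-definition commutes with the induction schema, which is routine but requires that the translation preserve logical structure. Both points are handled by the usual arithmetization of syntax available in any model of ${\rm PA}$.
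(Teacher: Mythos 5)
Your proof is correct, and it is essentially the standard argument: the paper does not prove this lemma but cites it as Proposition~1.9.4 of Kossak--Schmerl, where exactly your two ingredients appear --- overspill on the $L^*$-definable set of lengths of initial segments of the type witnessed via $\Gamma$ for part (i), and compactness against definable finite approximations $\Gamma_0$ followed by resplendence for part (ii). The only caveat is the bookkeeping you already flag: with the paper's convention $\<\lc\phi\rc,\bar v\>\in\Gamma$ you should keep the parameters $\bar a$ inside the tuple slot rather than substituting them into the formula code, since nonstandard elements have no numerals; with that adjustment everything goes through.
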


We now apply the foregoing to complete the proof of Theorem~\ref{main}. Let $\M\models$ PA be countable and recursively saturated, and let $\Gamma\subset \M$ be an inductive satisfaction class as guaranteed by Lemma~\ref{satisfclass}(ii).
We now carry out the basic construction starting with $(\M,\Gamma)\models$ PA$^*$.  The corresponding reducts to the language of PA are elementarily equivalent recursively saturated $\omega_1$-like models of PA with standard system 
$\SS(\M)$ and non-isomorphic real closures. With this, Theorem~\ref{main} is proved.

 \end{document}